\definecolor{darkred}{RGB}{139,0,0}
\definecolor{darkblue}{RGB}{0,0,139}
\definecolor{darkgreen}{RGB}{0,100,0}
\tikzset{
  symbol/.style={
    draw=none,
    every to/.append style={
      edge node={node [sloped, allow upside down, auto=false]{$#1$}}}}}
   \def\MR#1{}}
\newtheorem{bigthm}{Theorem}
\newtheorem{bigcor}[bigthm]{Corollary}
\newtheorem{thm}{Theorem}[section]
\newtheorem{lem}[thm]{Lemma}
\newtheorem{prop}[thm]{Proposition}
\newtheorem{cor}[thm]{Corollary}
\newtheorem{conjecture}[thm]{Conjecture}
\theoremstyle{definition}
\theoremstyle{remark}
\newtheorem{rem}[thm]{Remark}
\newtheorem*{nrem}{Remark}
\newcommand{\id}{\ensuremath{\operatorname{id}}}
\newcommand{\im}{\ensuremath{\operatorname{im}}}
\newcommand{\free}{\ensuremath{\operatorname{free}}}
\newcommand{\coker}{\ensuremath{\operatorname{coker}}}
\newcommand{\Bord}{\Omega^{\ensuremath{\operatorname{SO}}}}
\newcommand{\Hom}{\ensuremath{\operatorname{Hom}}}
\newcommand{\MT}{\ensuremath{\mathbf{MT}}}
\newcommand{\Thom}{\ensuremath{\mathbf{Th}}}
\newcommand{\MTSO}{\ensuremath{\mathbf{MTSO}}}
\newcommand{\MSO}{\ensuremath{\mathbf{MSO}}}
\newcommand{\MO}{\ensuremath{\mathbf{MO}}}
\newcommand{\SO}{\ensuremath{\operatorname{SO}}}
\newcommand{\STop}{\ensuremath{\operatorname{STop}}}
\newcommand{\BO}{\ensuremath{\operatorname{BO}}}
\newcommand{\BSO}{\ensuremath{\operatorname{BSO}}}
\newcommand{\BSG}{\ensuremath{\operatorname{BSG}}}
\newcommand{\BTop}{\ensuremath{\operatorname{BTop}}}
\newcommand{\BSPL}{\ensuremath{\operatorname{BSPL}}}
\newcommand{\SPL}{\ensuremath{\operatorname{SPL}}}
\newcommand{\BSTop}{\ensuremath{\operatorname{BSTop}}}
\newcommand{\Diff}{\ensuremath{\operatorname{Diff}^{\scaleobj{0.8}{+}}}}
\newcommand{\Diffuo}{\ensuremath{\operatorname{Diff}}}
\newcommand{\Homeo}{\ensuremath{\operatorname{Homeo}^{\scaleobj{0.8}{+}}}}
\newcommand{\BDiff}{\ensuremath{\operatorname{BDiff}^{\scaleobj{0.8}{+}}}}
\newcommand{\BDiffuo}{\ensuremath{\operatorname{BDiff}}}
\newcommand{\BHomeo}{\ensuremath{\operatorname{BHomeo}^{\scaleobj{0.8}{+}}}}
\newcommand{\Sp}{\ensuremath{\operatorname{Sp}}}
\newcommand{\GL}{\ensuremath{\operatorname{GL}}}
\newcommand{\BSp}{\ensuremath{\operatorname{BSp}}}
\newcommand{\bP}{\ensuremath{\operatorname{bP}}}
\newcommand{\ph}{\ensuremath{\operatorname{ph}}}
\newcommand{\interior}[1]{\ensuremath{\operatorname{int}(#1)}}
\newcommand{\odd}{\ensuremath{\operatorname{odd}}}
\newcommand{\ord}{\ensuremath{\operatorname{ord}}}
\newcommand{\num}{\ensuremath{\operatorname{num}}}
\newcommand{\denom}{\ensuremath{\operatorname{denom}}}
\newcommand{\catsingle}[1]{\ensuremath{\mathcal{#1}}}
\newcommand{\oH}{\ensuremath{\operatorname{H}}}
\newcommand{\bfC}{\ensuremath{\mathbf{C}}}
\newcommand{\bfH}{\ensuremath{\mathbf{H}}}
\newcommand{\bfO}{\ensuremath{\mathbf{O}}}
\newcommand{\bfR}{\ensuremath{\mathbf{R}}}
\newcommand{\bfZ}{\ensuremath{\mathbf{Z}}}
\newcommand{\bfQ}{\ensuremath{\mathbf{Q}}}
\newcommand{\bfS}{\ensuremath{\mathbf{S}}}
\newcommand{\cA}{\ensuremath{\catsingle{A}}}
\newcommand{\cL}{\ensuremath{\catsingle{L}}}
\newcommand{\cS}{\ensuremath{\catsingle{S}}}
\newcommand{\Maps}{\ensuremath{\operatorname{Maps}}}
\newcommand{\ra}{\rightarrow}
\newcommand{\xra}[1]{\xrightarrow{#1}}
\newcommand{\Mod}[1]{\ (\mathrm{mod}\ #1)}
\begin{document}

\title[Characteristic numbers of manifold bundles over surfaces with highly connected fibers]{Characteristic numbers of manifold bundles over surfaces with highly connected fibers}
\begin{abstract} 
We study smooth bundles over surfaces with highly connected almost parallelizable fiber $M$ of even dimension, providing necessary conditions for a manifold to be bordant to the total space of such a bundle and showing that, in most cases, these conditions are also sufficient. Using this, we determine the characteristic numbers realized by total spaces of bundles of this type, deduce divisibility constraints on their signatures and $\hat{A}$-genera, and compute the second integral cohomology of $\BDiff(M)$ up to torsion in terms of generalized Miller--Morita--Mumford classes. We also prove analogous results for topological bundles over surfaces with fiber $M$ and discuss the resulting obstructions to smoothing them.
\end{abstract}

\author{Manuel Krannich}
\email{\href{mailto:krannich@dpmms.cam.ac.uk}{krannich@dpmms.cam.ac.uk}}
\address{Centre for Mathematical Sciences, Wilberforce Road, Cambridge CB3 0WB, UK}

\author{Jens Reinhold} 
\email{\href{mailto:jens.reinhold@uni-muenster.de}{jens.reinhold@uni-muenster.de}}
\address{Mathematisches Institut, Einsteinstr. 62, 48149 M{\"u}nster, Germany}


\maketitle


By work of Chern--Hirzebruch--Serre \cite{ChernHirzebruchSerre}, the signature of a closed oriented manifold is multiplicative in fiber bundles as long as the fundamental group of the base acts trivially on the rational cohomology of the fiber. The necessity of this assumption was illustrated by Kodaira \cite{Kodaira}, Atiyah \cite{Atiyah}, and Hirzebruch \cite{HirzebruchSignature}, who constructed manifolds of nontrivial signature fibering over surfaces, whereupon Meyer \cite{MeyerThesis,Meyer} computed the minimal positive signature arising in this way to be $4$. The divisibility of the signature $\sigma\colon\Bord_*\ra\bfZ$ by $4$ is therefore a necessary condition for a manifold to fiber over a surface up to bordism, which, when combined with the vanishing of a certain Stiefel--Whitney number, is also sufficient \cite[Thm 3]{AlexanderKahn}.

A more refined problem is to decide which manifolds fiber over a surface up to bordism with prescribed $d$-dimensional fiber $M$, or equivalently, to determine the image of the map \[\Bord_2\left(\BDiff(M)\right)\ra\Bord_{d+2},\] defined on the bordism group of smooth oriented $M$-bundles over surfaces, which assigns to a bundle its total space. The main objective of this work is to provide a solution to this problem, and its analogue for topological $M$-bundles, for highly connected, almost parallelizable, even dimensional manifolds $M$, such as $M=\sharp^g(S^n\times S^n)$, satisfying a condition on their \emph{genus}
\[g(M) = \max\{g \geq 0\mid\text{there is a manifold }N\text{ such that }M \cong \sharp^g (S^n \times S^n) \sharp N\}.\] 

In the first part of this work, which focusses on the smooth case, we use parametrized Pontryagin--Thom theory to show that the bordism class of a manifold fibering over a surface with fiber $M$ as above lifts to the bordism group $\Omega^{\langle n \rangle}_{2n+2}$ of highly-connected (i.e., $n$-connected) manifolds, and that this property, together with the divisibility of the signature by $4$, detects such bordism classes for $g(M)\ge5$.

\begin{bigthm}\label{bigtheorem:bordismimage}For an $(n-1)$-connected almost parallelizable $2n$-manifold $M$, the image of
\[\Bord_2 \left(\BDiff(M)\right) \to \Bord_{2n+2}\] is contained in the subgroup
\[\im(\Omega^{\langle n \rangle}_{2n+2} \to  \Bord_{2n+2}) \cap \sigma^{-1} (4 \cdot\bfZ).\] Moreover, equality holds if $g(M)\ge5$. For $2n=2$, requiring $g(M)\ge 3$ is sufficient. 
\end{bigthm}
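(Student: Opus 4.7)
The plan is to treat the two halves of the statement separately. For the containment, the $4$-divisibility of the signature is an immediate consequence of Meyer's theorem applied to the bundle $E\to\Sigma$ classified by the map $\Sigma\to\BDiff(W_g)$. To produce a lift of the bordism class $[E]$ to $\Omega^{\langle n\rangle}_{2n+2}$, I would argue tangentially: since $W_g$ is $(n-1)$-connected and stably parallelizable, every orientation-preserving self-diffeomorphism preserves a canonical $n$-connective reduction of the structure group of its tangent bundle, so the vertical tangent bundle of $E\to\Sigma$ inherits such a reduction. Since $\Sigma$ is only $2$-dimensional, this extends to an $n$-connective reduction of the stable tangent bundle of $E$, and standard surgery below the middle dimension then yields a bordism from $E$ to a genuinely $n$-connected manifold, representing the desired lift.

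For the equality when $g\geq 5$, my strategy is to combine parametrized Pontryagin--Thom theory with the Galatius--Randal-Williams stable homology theorem. The bordism map $\Bord_2(\BDiff(W_g))\to\Bord_{2n+2}$ factors through a natural transformation induced by the parametrized Pontryagin--Thom collapse into the Madsen--Tillmann spectrum $\MT\theta$ for the $n$-connective tangential structure $\theta$ identified above. By the Galatius--Randal-Williams theorem together with its homological stability companion, $\oH_2(\BDiff(W_g);\bfZ)$ agrees with $\oH_2(\Omega^\infty_0\MT\theta;\bfZ)$ once $g\geq 5$, so the realizability problem reduces to a question about classes in $\pi_{2n+2}(\MT\theta)$ detectable by the characteristic numbers of the total space, which can be read off by standard manipulations of the Madsen--Tillmann spectrum.

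The main obstacle I foresee is then to exhibit, for every bordism class in $\im(\Omega^{\langle n\rangle}_{2n+2}\to\Bord_{2n+2})\cap\sigma^{-1}(4\bfZ)$, an explicit $W_g$-bundle over a surface realizing it. I would use the description of the lattice of characteristic numbers of $n$-connected $(2n+2)$-manifolds announced in the abstract to choose a convenient generating set of the target subgroup in terms of Pontryagin and Stiefel--Whitney numbers, and then construct bundles one generator at a time. Meyer's $4$-dimensional surface bundle of signature $4$, stabilized by fiberwise connected sum with trivial $S^n\times S^n$-factors and combined with simple product bundles over surfaces of various genera, should provide enough flexibility to hit every generator, once paired with the generalized Miller--Morita--Mumford class calculations also announced in the abstract. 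In the special case $2n=2$, the genus bound improves to $g\geq 3$ because Harer's classical stability range for mapping class groups of surfaces already provides the necessary degree-$2$ homological stability at that genus, making the above stable-range argument available one step earlier.
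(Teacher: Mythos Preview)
Your containment argument is essentially fine and parallels the paper's reasoning, though the paper packages both conditions into a single factorisation through $\Bord_2(\Omega^\infty\MT\theta_n)$ rather than treating the signature and the highly connected lift separately.

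The equality half, however, has two genuine gaps.

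First, the Galatius--Randal-Williams theorem you invoke is a statement about $\BDiffuo(W_g,D^{2n})$, not about $\BDiff(W_g)$: the parametrised Pontryagin--Thom map lands in $\Omega^\infty\MT\theta_n$ only after a disc is fixed, because this is what pins down the $\theta_n$-structure on the vertical tangent bundle. Passing from $\BDiffuo(W_g,D^{2n})$ to $\BDiff(W_g)$ is not automatic; the paper spends a separate argument (Proposition~1.8) on this, using the fibre sequences in \eqref{equation:fibersequences} and an analysis of the class $\eta\cdot[W_g]$. You have not addressed this step at all.

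Second, and more seriously, once you have reduced to $\Bord_2(\Omega^\infty\MT\theta_n)$ there is no need to ``exhibit an explicit $W_g$-bundle'' for each generator, and your proposed mechanism for doing so does not work anyway: Meyer's construction produces surface bundles over surfaces, and fibrewise connected sum with $S^n\times S^n$ does not convert these into $W_g$-bundles when $2n>2$. The paper instead reduces (via Lemma~1.3) to the image of $\pi_2\MT\theta_n\to\Bord_{2n+2}$, and the decisive input is the computation of the cokernel of the stabilisation map $s_*\colon\pi_2\MT\theta_n\to\Omega^{\langle n\rangle}_{2n+2}$ (Lemma~1.6, taken from \cite{GRWabelian}): this cokernel is trivial for $n\neq 1,3,7$ and $\bfZ/4$ otherwise, detected precisely by the signature modulo $4$. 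That is the lemma your sketch is missing. Your plan to use the lattice of characteristic numbers from later in the paper would also be circular, since those computations take \cref{bigtheorem:bordismimage} as input.
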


\begin{nrem}\label{rem:reformulationwithout4}
For $4m\neq4,8,16$, the intersection form of a highly connected almost parallelizable $4m$-manifold is even, so its signature is divisible by $8$, which simplifies the subgroup appearing in \cref{bigtheorem:bordismimage} to $\im(\Omega^{\langle n \rangle}_{2n+2} \to  \Bord_{2n+2})$. This is in contrast to the cases $4m = 4, 8, 16$, in which there exist examples of signature $1$, such as $\bfC P^2$, $\bfH P^2$, and $\bfO P^2$.
\end{nrem}

\begin{nrem}\cref{bigtheorem:bordismimage} was already known for small dimensions. For $2n=2$, it follows from Meyer's aforementioned results, for $2n=4$ the statement is trivial since $\Bord_6$ is trivial, and for $2n=6$, it was observed by Randal-Williams in an unpublished note \cite{RWnote}.\end{nrem}

\subsection*{Highly connected bordism}\cref{section:highly_connected_bordism} is devoted to a closer study of the image of the morphism $\Omega^{\langle n \rangle}_{2n+2} \to  \Bord_{2n+2}$ appearing in \cref{bigtheorem:bordismimage}. 
Note that this morphism factors over the quotient of $\Omega^{\langle n \rangle}_{2n+2}$ by Kervaire--Milnor's \cite{KervaireMilnor} group $\Theta_{2n+2}$ of homotopy spheres. A characteristic class argument (see \cref{section:highly_connected_bordism}) shows furthermore that the morphism is trivial unless $2n+2=4m$, leaving us with the need to understand the image of \begin{equation}\label{equation:bordismrestrictionintro}\Omega^{\langle 2m-1 \rangle}_{4m}/\Theta_{4m} \to  \Bord_{4m}.\end{equation} To this end, we combine Wall's work on the classification of highly connected manifolds \cite{Wall2n,Wall2n+1} with a theorem due to Brumfiel \cite{Brumfiel} and enhancements by Schultz \cite{Schultz} and Stolz \cite{Stolz} to derive a concrete description of $\Omega^{\langle 2m-1 \rangle}_{4m}/\Theta_{4m}$ (see \cref{proposition:kernel}), which, however, depends on one unknown: the order $\ord([\Sigma_Q])$ of the class \[[\Sigma_Q]\in\coker(J)_{4m-1}\] of a certain homotopy sphere $\Sigma_Q\in\Theta_{4m-1}$. Although central to the classification of highly connected manifolds, this class is only known in special cases (see \cref{theorem:AndersonSchultz}); Galatius--Randal-Williams \cite[Conj.\,A]{GRWabelian} conjectured it to be trivial. Nevertheless, our description of $\Omega^{\langle 2m-1 \rangle}_{4m}/\Theta_{4m}$ is explicit enough to compute the Pontryagin numbers, signatures, and $\hat{A}$-genera realized by highly connected manifolds in terms of $\ord([\Sigma_Q])$ (see \cref{proposition:latticeodd} and \ref{proposition:latticeeven}), resulting in various descriptions of the image of \eqref{equation:bordismrestrictionintro} expressed in these invariants.

\begin{nrem}After the completion of this work, the class $[\Sigma_Q]\in\coker(J)_{4m-1}$ was shown to be trivial for $m>62$ in work of Burklund, Hahn, and Senger \cite{BHS}.
\end{nrem}

\subsection*{Divisibility of the signature}
In \cref{section:lattices}, we combine these computations with \cref{bigtheorem:bordismimage} to derive divisibility constraints for characteristic numbers of total spaces of smooth $M$-bundles over surfaces, and to determine these numbers completely for $g(M)\ge5$. To state the outcome for the signature, we remind the reader of the minimal positive signature \[\sigma_m=a_m2^{2m+1}(2^{2m-1}-1)\num\left(\frac{\lvert B_{2m}\rvert}{4m}\right)\] realized by an almost parallelizable $4m$-manifold (see \cite{KervaireMilnorBernoulli}). Here $B_i$ is the $i$th Bernoulli number, and $a_m$ is $1$ if $m$ is even and $2$ otherwise. The $2$-adic valuation is denoted by $\nu_2(-)$.

\begin{bigthm}\label{bigthm:signatures_Ahat}
Let $M$ be a closed, highly connected, almost parallelizable $(4m-2)$-manifold. For a smooth $M$-bundle $\pi\colon E\ra S$ over a closed oriented surface, the signature of $E$ is divisible by
\[\begin{cases}
4&\text{for }m=1,2,4\\
\sigma_m &\text{for }m\neq 1\text{ odd}\\ 
2^{i_m}\gcd(\sigma_{m/2}^2,\sigma_{m})&\text{for }m\neq 2,4\text{ even},\end{cases}\] where \[i_m=\min\left(0,\nu_2(\ord([\Sigma_Q]))-2\nu_2(m)-4+2\nu_2(a_{m/2})\right).\]
For $m\ge2$, the $\hat{A}$-genus of $E$ is integral and divisible by
\[\begin{cases}
2\num(\frac{\lvert B_{2m}\rvert}{4m})&\text{for }m\text{ odd}\\ 
\gcd(\num(\frac{\lvert B_{2m}\rvert}{4m}),\num(\frac{\lvert B_{m}\rvert}{2m})^2)&\text{for }m\text{ even}.\end{cases}\] 
Moreover, if $g(M)\ge5$, then these numbers are realized as signatures and $\hat{A}$-genera of total spaces of bundles of the above type. For $m=1$, requiring $g(M)\ge 3$ is sufficient.
\end{bigthm}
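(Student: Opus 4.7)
My plan is to deduce \cref{bigthm:signatures_Ahat} from \cref{bigtheorem:bordismimage} together with the explicit descriptions of the signature and $\hat{A}$-genus lattices of highly connected bordism developed in \cref{section:highly_connected_bordism}, in particular \cref{proposition:latticeodd,proposition:latticeeven}. Both $\sigma$ and the rational $\hat{A}$-genus are oriented bordism invariants that vanish on $\Theta_{4m}$, since homotopy spheres bound stably parallelizable manifolds and hence have trivial rational Pontryagin numbers. Consequently, \cref{bigtheorem:bordismimage} reduces the determination of the possible values of $\sigma(E)$ and $\hat{A}(E)$ to evaluating $\sigma$ and $\hat{A}$ on the subgroup
\[
L_m := \im\bigl(\Omega^{\langle 2m-1 \rangle}_{4m}/\Theta_{4m}\to\Bord_{4m}\bigr)\cap\sigma^{-1}(4\bfZ)\subseteq\Bord_{4m}.
\]
The realization statements for $g\ge5$ (and for $g\ge3$ when $m=1$) will then follow immediately from the equality part of \cref{bigtheorem:bordismimage}, so the remaining task is to compute $\sigma(L_m)$ and $\hat{A}(L_m)$ as subgroups of $\bfZ$ and $\bfQ$.

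For the signature in the exceptional dimensions $4m\in\{4,8,16\}$, the highly connected manifolds $\bfC P^2$, $\bfH P^2$, and $\bfO P^2$ have signature $1$, so the signature is surjective onto $\bfZ$ on $\im(\Omega^{\langle 2m-1 \rangle}_{4m}\to\Bord_{4m})$ and intersection with $4\bfZ$ yields exactly divisibility by $4$. In the remaining cases I will extract the signature image from \cref{proposition:latticeodd,proposition:latticeeven}: for odd $m\neq1$ it is generated by $\sigma_m$, which already lies in $4\bfZ$, so no further correction arises; for even $m\neq2,4$ it is generated by $\sigma_m$ together with a class whose signature is a $2$-adic multiple of $\sigma_{m/2}^2$, arising from a ``squaring'' construction implicit in the Brumfiel--Stolz description of $\Omega^{\langle 2m-1\rangle}_{4m}/\Theta_{4m}$ referenced in \cref{proposition:kernel}. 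After intersecting with $4\bfZ$, these two generators collapse to the $\gcd$ in the statement, and the $2$-adic correction $i_m$ is precisely what absorbs the (generally unknown) contribution of $\ord([\Sigma_Q])$.

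For the $\hat{A}$-genus, integrality is automatic: by \cref{bigtheorem:bordismimage}, $E$ is oriented-bordant to a $(2m-1)$-connected $4m$-manifold, which for $m\ge2$ is spin and therefore has integral $\hat{A}$-genus by Atiyah--Singer. The divisibility then follows by applying the same strategy to $\hat{A}$ instead of $\sigma$: on the generator of $\Omega^{\langle 2m-1\rangle}_{4m}/\Theta_{4m}$ of minimal positive signature $\sigma_m$ one computes, via the Hirzebruch relation between the $L$- and $\hat{A}$-polynomials on almost parallelizable $4m$-manifolds, that $\hat{A}$ takes the value $2\num(|B_{2m}|/(4m))$; in the even-$m$ case the second generator contributes $\num(|B_m|/(2m))^2$ up to a $2$-adic unit, producing the claimed $\gcd$. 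The principal obstacle in both parts will be the careful $2$-adic bookkeeping for even $m$, where one must trace the contribution of $[\Sigma_Q]\in\coker(J)_{4m-1}$ through Wall's classification into \cref{proposition:kernel}; once this is in place, the remainder of the argument is routine arithmetic manipulation of characteristic numbers on the known generators of the highly connected bordism lattice.
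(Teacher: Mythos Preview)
Your proposal is correct and follows essentially the same route as the paper: reduce via \cref{bigtheorem:bordismimage} to evaluating $\sigma$ and $\hat{A}$ on $\im(\Omega^{\langle 2m-1\rangle}_{4m}/\Theta_{4m}\to\Bord_{4m})\cap\sigma^{-1}(4\bfZ)$, and then read off the answers from the explicit generators in \cref{proposition:latticeodd,proposition:latticeeven}. The paper packages the ``careful $2$-adic bookkeeping'' you anticipate into the separate \cref{proposition:minimalsignature} and \cref{proposition:minimalAhat}, but the content and the arithmetic are exactly what you outline.
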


\begin{bigcor}\label{bigcor:signatures}
Let $M$ be a closed, highly connected, almost parallelizable $(4m-2)$-manifold. There exists a smooth $M$-bundle over a closed oriented surface with total space of signature $4$ if and only if $4m=4,8,16$. If $4m\neq4,8,16$, then the signature of the total space of such a bundle is divisible by $2^{2m+2}$ for $m$ odd and by $2^{2m-2\nu_2(m)-3}$ for $m$ even.
\end{bigcor}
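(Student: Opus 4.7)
The plan is to deduce the corollary from Theorem B by an elementary $2$-adic valuation computation, splitting on the parity of $m$.

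For the if-direction of the first claim, when $4m \in \{4,8,16\}$ (so $m \in \{1,2,4\}$), Theorem B asserts divisibility of the signature by $4$ together with the sharpness of this bound: it is realized by some $W_g$-bundle as soon as $g \geq 5$ (or $g \geq 3$ if $m=1$). Any such bundle exhibits the desired signature-$4$ total space.

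For the only-if direction, together with the explicit divisibility estimates, I lower-bound $\nu_2$ of the divisor supplied by Theorem B in each parity case. When $m \geq 3$ is odd, $a_m = 2$, and the formula $\sigma_m = a_m 2^{2m+1}(2^{2m-1}-1)\num(|B_{2m}|/4m)$, combined with the fact that the Mersenne factor is odd and the numerator is a positive integer, gives $\nu_2(\sigma_m) \geq 2m+2$. This yields the claimed divisibility by $2^{2m+2}$, and since $2m+2 \geq 8$ it excludes signature $4$.

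When $m$ is even with $m \notin \{2,4\}$, Theorem B gives divisibility by $2^{i_m}\gcd(\sigma_{m/2}^2,\sigma_m)$. Reading off the $\sigma$-formula gives $\nu_2(\sigma_m) \geq 2m+1$ and $\nu_2(\sigma_{m/2}) \geq m+1+\nu_2(a_{m/2})$, so that $\nu_2(\gcd(\sigma_{m/2}^2,\sigma_m)) \geq 2m+1$. The trivial bound $\nu_2(\ord([\Sigma_Q])) \geq 0$ yields $i_m \geq -2\nu_2(m)-4+2\nu_2(a_{m/2})$. Summing, I obtain the valuation bound $\geq 2m-2\nu_2(m)-3+2\nu_2(a_{m/2}) \geq 2m-2\nu_2(m)-3$, which is $\geq 7$ for every even $m \geq 6$, again ruling out signature $4$.

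I do not anticipate a genuine obstacle; the corollary is essentially numerical bookkeeping once the divisibilities of Theorem B are in hand. The only mildly subtle point is that the unknown invariant $\ord([\Sigma_Q])$ enters through $i_m$, but the crude bound $\nu_2(\ord([\Sigma_Q])) \geq 0$ suffices—no finer information about $[\Sigma_Q]$ is required.
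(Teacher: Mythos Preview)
Your argument is correct and follows the same route as the paper: deduce the corollary from Theorem~B by computing $2$-adic valuations of $\sigma_m$ and bounding $i_m$ from below via $\nu_2(\ord([\Sigma_Q]))\ge 0$. The paper records this even more tersely, noting only that $\nu_2(\sigma_m)=2m+1+\nu_2(a_m)$ (in fact equality holds, since $\num(|B_{2m}|/4m)$ is odd by von Staudt--Clausen) and $i_m\ge -2\nu_2(m)-4$, and then states the corollary; your write-up spells out the same computation in slightly more detail.
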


\begin{nrem}Rovi \cite{Rovi} identified the non-multiplicativity of the signature modulo $8$ as an Arf--Kervaire invariant. As a consequence of \cref{bigcor:signatures}, her invariant vanishes for bundles over surfaces with highly connected almost parallelizable fibers, except possibly for those with total space of dimension $4,8$, or $16$.
\end{nrem}

\subsection*{Generalized Miller--Morita--Mumford classes}Recall the \emph{vertical tangent bundle} $T_\pi E$ of a smooth oriented bundle $\pi\colon E\ra B$ with closed $d$-dimensional fiber $M$ over an $l$-dimensional base, defined as the kernel $T_\pi E=\ker(d\pi\colon TE\ra\pi^*TB)$ of the differential. The \emph{generalized Miller--Morita--Mumford class} $\kappa_c$ associated to a class $c\in\oH^{*+d}(\BSO;k)$ with coefficients in an abelian group $k$ is obtained by integrating $c(T_\pi E)$ along the fibers,
\[\kappa_c(\pi)=\int_{M}c(T_\pi E)\in\oH^{*}(B;k).\] In the universal case, this gives rise to $\kappa_c\in\oH^{*}(\BDiff(M);k)$. If $B$ is stably parallelizable, the bundles $TE$ and $T_\pi E$ are stably isomorphic, so for $c\in\oH^{d+l}(\BSO;k)$, the two characteristic numbers obtained by integrating $\kappa_c(\pi)\in\oH^{l}(B;k)$ over $B$ and $c(TE)\in\oH^{d+l}(E;k)$ over $E$ coincide. For $B$ a surface, this is expressed in the commutativity of the diagram
\begin{equation}\label{eq:CommutativeDiagramIntroduction}
\begin{tikzcd}[column sep=0.5cm, row sep=0.5cm]
\Bord_{2}\left(\BDiff(M)\right) \arrow[r] \arrow[rd, "\kappa_c",swap] & \Bord_{d+2}  \arrow[d, "c"] \\
 & k.
\end{tikzcd}
\end{equation}
All our results on characteristic numbers of total spaces of bundles over surfaces with a fixed fiber $M$ can thus be expressed in terms of values of classes $\kappa_c\in\oH^2(\BDiff(M);k)$ for various $c$. To exemplify this, note that \cref{bigthm:signatures_Ahat} computes the divisibility of the classes $\kappa_{\cL_{4m}}$ and $\kappa_{\hat{\cA}_{4m}}$ in the torsion free quotient $\oH^2(\BDiff(M);\bfZ)_{\free}$.

Exploiting this alternative viewpoint, we determine an explicit basis of $\oH^2(\BDiff(M);\bfZ)_{\free}$ for most highly connected (i.e.\,$(n-1)$-connected) almost parallelizable $2n$-manifolds $M$ in \cref{section:lattices}. Rationally, this group is for $2n\ge6$ and $g(M)\ge7$ given as 
\begin{equation}\label{equ:computationGRW}
\oH^2\left(\BDiff(M);\mathbf{Q}\right) = \begin{cases}
0  & \text{if } 2n \equiv 0 \Mod{4}\\
\mathbf{Q}\kappa_{p_{(n+1)/2}} & \text{if } 2n \equiv 2 \Mod{8} \\
\mathbf{Q}\kappa_{p_{(n+1)/2}} \oplus \mathbf{Q}\kappa_{p^2_{(n+1)/4}} & \text{if } 2n \equiv 6 \Mod{8},
\end{cases}\end{equation} as a consequence of work of Galatius--Randal-Williams \cite{GRWstable,GRWI}. Having computed the values of these Morita--Miller--Mumford classes on bundles over surfaces enables us to lift this rational basis to a basis of $\oH^2(\BDiff(M);\bfZ)_{\free}$. To state the outcome, we define 
\[j_m=\denom\left(\frac{\lvert B_{2m}\rvert}{4m}\right),\quad a_m=\begin{cases}2&\mbox{for }m\mbox{ odd}\\1&\mbox{for }m\mbox{ even,}\end{cases} \quad \text{ and}\quad \mu_m=\begin{cases}2&\mbox{if }m=1,2\\1&\mbox{else,}\end{cases}\] and fix B{\'e}zout coefficients $c_m$ and $d_m$ for the numerator and denominator of $\frac{\lvert B_{2m}\rvert }{4m}$, i.e.,
\[c_m\num\left(\frac{\lvert B_{2m}\rvert }{4m}\right)+d_m\denom\left(\frac{\lvert B_{2m}\rvert }{4m}\right)=1.\]

\begin{bigthm}\label{bigtheorem:cobordismlattice}For a closed, $(n-1)$-connected, almost parallelizable $2n$-manifold $M$ with $2n\ge6$ and $g(M)\ge7$, the group $\oH^2(\BDiff(M);\mathbf{Z})_{\free}$ is generated by \[\frac{ \kappa_{p_{m}}}{2(2m-1)!j_{m}}\] for $2n\equiv2\Mod{8}$, where $m=(n+1)/2$, and by \begin{align*}\frac{\kappa_{p_{k}^2}}{2\mu_{k} a_{k}^2\ord([\Sigma_Q])(2k-1)!^2}\quad\text{and}\quad\frac{2\kappa_{p_{2k}}-\kappa_{p_{k}^2}}{2(4k-1)!j_{2k}}-\frac{\frac{\lvert B_{2k}\rvert }{4k}\left(c_{2k}\frac{\lvert B_{2k}\rvert }{4k}+2d_{2k}(-1)^{k}\right) \kappa_{p_{k}^2}}{2(2k-1)!^2}\end{align*} for $2n\equiv6\Mod{8}$, where $k=(n+1)/4$.
\end{bigthm}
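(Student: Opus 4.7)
The plan is to use Kronecker duality to identify $\oH^2(\BDiff(W_g);\bfZ)_{\free}$ with the $\bfZ$-dual of a lattice of Pontryagin numbers whose description follows from the earlier results of the paper. Every integer $2$-dimensional homology class of a space is represented by a closed oriented surface, so the natural map $\Bord_2(\BDiff(W_g))\to\oH_2(\BDiff(W_g);\bfZ)$ is surjective; combined with the Kronecker pairing, this identifies $\oH^2(\BDiff(W_g);\bfZ)_{\free}$ with the sublattice of $\oH^2(\BDiff(W_g);\bfQ)$ consisting of rational classes taking integer values on every bundle $\pi\in\Bord_2(\BDiff(W_g))$. In the stated range, the Galatius--Randal-Williams basis of $\oH^2(\BDiff(W_g);\bfQ)$ recalled above the theorem shows that this sublattice sits in either a $1$- or $2$-dimensional rational vector space.

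For a bundle $\pi\colon E\to S$ over a closed oriented surface, the stable triviality of $TS$ gives a stable isomorphism $T_\pi E\cong TE$, so evaluating a $\kappa$-class on $\pi$ returns the Pontryagin number $c(TE)[E]$ of the total space. The lattice in question is therefore the $\bfZ$-dual of the lattice of tuples of such Pontryagin numbers of total spaces of $W_g$-bundles over surfaces. For $g\ge 5$ this lattice is controlled by \cref{bigtheorem:bordismimage} together with the characteristic-number computations of \cref{section:lattices}, and its generators are expressible in terms of $\sigma_m$, $\sigma_k$, the numerator and denominator of $|B_{2m}|/4m$, and $\ord([\Sigma_Q])$. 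To extract the generators claimed in the theorem, one expands the relevant Hirzebruch polynomials of degree $2n+2$ in the $\kappa$-basis, writes the image lattice as a matrix, and inverts it over $\bfQ$; in the rank-$1$ case this recovers $\kappa_{p_m}/(2(2m-1)!j_m)$ directly from the Hirzebruch formula $\sigma=L_m[E]$ combined with the explicit value of the minimal signature $\sigma_m$, while in the rank-$2$ case the B\'ezout coefficients $c_m, d_m$ arise in the change of basis that splits the lattice into a ``pure $\kappa_{p_k^2}$'' direction and an ``orthogonal'' direction detected by the $p_k^2$-free part of $L_{2k}$.

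The main obstacle is this rank-$2$ case $2n\equiv 6\Mod{8}$: the two Pontryagin numbers $p_{2k}[E]$ and $p_k^2[E]$ are intertwined both by the Stolz--Brumfiel-type description of $\Omega^{\langle 2m-1\rangle}_{4m}/\Theta_{4m}$ (which controls $p_k^2$ via $\ord([\Sigma_Q])$ together with the Kervaire--Milnor-type factor $2\mu_k a_k^2(2k-1)!^2$) and by the signature-realization constraints on the combination $2p_{2k}-p_k^2$. Disentangling these to produce the specific integral basis stated in the theorem amounts to a careful arithmetic computation with Bernoulli numerators and denominators.
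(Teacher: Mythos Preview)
Your proposal is correct and follows essentially the same route as the paper: both identify $\oH^2(\BDiff(W_g);\bfZ)_{\free}$ with the $\bfZ$-dual of the Pontryagin-number lattice of the image of $\Bord_2(\BDiff(W_g))\to\Bord_{2n+2}$, invoke \cref{bigtheorem:bordismimage} to equate that image with $\im(\Omega^{\langle n\rangle}_{2n+2}\to\Bord_{2n+2})\cap\sigma^{-1}(4\bfZ)$, and then read off the dual basis by inverting the matrix of Pontryagin numbers supplied by Propositions~\ref{proposition:latticeodd} and~\ref{proposition:latticeeven} (together with \cref{remark:usefulbasis}). The only cosmetic difference is that the paper phrases the dualization as first establishing an isomorphism $\Bord_2(\BDiff(M,D^{4m-2}))_{\free}\cong\im(\Omega^{\langle 2m-1\rangle}_{4m}\to\Bord_{4m})\cap\sigma^{-1}(4\bfZ)$ via a rank count, whereas you go directly to the dual lattice using the surjectivity (in fact isomorphism) $\Bord_2(X)\to\oH_2(X;\bfZ)$; these are equivalent formulations of the same computation.
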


\begin{nrem}
For $2n\ge6$ and $g(M)\ge5$, the torsion in $\oH^2(\BDiff(M);\bfZ)$ for highly connected almost parallelizable $2n$-manifolds $M$ can be derived from work of Galatius--Randal-Williams \cite{GRWabelian}. It is nontrivial, except when $2n=6$. Consequently, in this case, the basis of $\oH^2(\BDiff(M);\bfZ)_{\free}$ in \cref{bigtheorem:cobordismlattice} is also a basis of $\oH^2(\BDiff(M);\bfZ)$.
 \end{nrem}
 
 \subsection*{Topological bundles}
In the final \cref{TopologicalBundles}, we turn from smooth bundles to their topological counterparts. Using smoothing theory, we show that for manifolds $M$ as above, the bordism groups $\Bord_2(\BDiff(M))$ and $\Bord_2(\BHomeo(M))$ of smooth and topological $M$-bundles over surfaces agree rationally (see \cref{lem:homeodiffrationally}). Furthermore, we complement our calculation of the characteristic numbers of the total spaces of such bundles in the smooth case by carrying out the analogous computation for topological bundles, which in particular implies that $\Bord_2(\BDiff(M))$ and $\Bord_2(\BHomeo(M))$ differ integrally. In contrast to the smooth case, this requires direct arguments involving the construction of explicit bundles, since parametrized Pontryagin--Thom theory is not available for topological manifolds so far. In terms of Miller--Morita--Mumford classes, our computation takes the following form.

\begin{bigthm}\label{bigthm:cobordismlatticeTOP}For a smooth, closed, $(n-1)$-connected, almost parallelizable $2n$-manifold with $2n\ge6$ and $g(M)\ge7$, the group $\oH^2(\BHomeo(M);\mathbf{Z})_{\free}$ is generated by \[\frac{\sigma_m}{8}\cdot\frac{\kappa_{p_{m}}}{2(2m-1)!j_m}\] for $2n\equiv2\Mod{8}$, where $m=(n+1)/2$, and by \begin{align*}\frac{\sigma_k^2}{2^{2\nu_2(k)+6}}\cdot\frac{\kappa_{p_{k}^2}}{2a_k^2(2k-1)!^2}\quad\text{and}\quad \frac{\sigma_{2k}}{8}\cdot\frac{2\kappa_{p_{2k}}-\kappa_{p_k^2}}{2(4k-1)!j_{2k}}+\frac{2^{4k-1}(2^{2k-1}-1)^2(\frac{B_{2k}}{4k})^2\kappa_{p_k^2}}{2(2k-1)!^2}\end{align*} for $2n\equiv6\Mod{8}$, where $k=(n+1)/4$ and $k\neq1,2$.\end{bigthm} 

In conjunction with its smooth analogue \cref{bigtheorem:cobordismlattice}, \cref{bigthm:cobordismlatticeTOP} provides obstructions to smoothing topological $M$-bundles over surfaces up to bordism, and shows that there are many examples that cannot be smoothed. This can already be seen by considering the values of the signature of the total space, specialized to which our computations yield the following, which in particular shows that the growth of the divisibility of the signature established in  \cref{bigthm:signatures_Ahat} and \cref{bigcor:signatures} is a purely smooth phenomenon.

\begin{bigthm}\label{bigthm:signaturestopologically}Let $M$ be a smooth, closed, highly connected, almost parallelizable $(4m-2)$-manifold with $m\neq1,2,4$. The signature of total spaces of topological $M$-bundles over a closed oriented surface is divisible by $8$. For $g(M)\ge4$, there exists such a bundle with signature $8$.
\end{bigthm}

\subsection*{Acknowledgements} We are grateful to S{\o}ren Galatius for various valuable discussions.

MK: I would like to thank S{\o}ren Galatius additionally for his kind hospitality during my stay at Stanford University in the course of which this work originated, and Oscar Randal-Williams for a helpful discussion. I was supported by the Danish National Research Foundation through the Centre for Symmetry and Deformation (DNRF92) and by the European Research Council (ERC) under the European Union's Horizon 2020 research and innovation programme (grant agreement No 682922).

JR: I was partially supported by the E.\,K.\,Potter Stanford Graduate Fellowship and by the NSF grant DMS-1405001, and I am supported by the Deutsche Forschungsgemeinschaft (DFG, German Research Foundation) under Germany's Excellence Strategy EXC 2044 -- 390685587, Mathematics M\"unster: Dynamics--Geometry--Structure.


\section{Bordism classes of manifolds fibering over surfaces}\label{section:bordism}
Let $M$ be a smooth, oriented, closed $2n$-manifold. As noted in the introduction, assigning to an $M$-bundle over a surface its total space yields a morphism of the form \begin{equation}\label{equation:totalspacemorphism}\Bord_2(\BDiff(M))\ra\Bord_{2n+2}.\end{equation} This section begins our study of its image. We first provide an alternative proof of the fact that the signature of classes in the image of \eqref{equation:totalspacemorphism} is divisible by $4$---a result originally due to Meyer \cite{MeyerThesis, Meyer}. Our proof is inspired by an argument for a related result by Gollinger \cite[Thm 2.0.10]{Gollinger}. This is followed by a recollection on parametrized Pontryagin--Thom theory, which is then used to prove \cref{bigtheorem:bordismimage}, i.e., to identify the image of \eqref{equation:totalspacemorphism} for highly connected, almost parallelizable manifolds $M$ satisfying an assumption on their genus.

\begin{thm}[Meyer]\label{theorem:signaturedivisiblyby4}
The signature of the total space of a smooth bundle of oriented closed manifolds over a surface is divisible by $4$.
\end{thm}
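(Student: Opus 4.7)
First, I note that $\sigma\colon\Bord_{2n+2}\to\bfZ$ vanishes unless $2n+2=4m$, so I assume this throughout. Since every closed oriented surface $S$ is stably parallelizable, the short exact sequence $0\to T_\pi E\to TE\to\pi^*TS\to 0$ shows that $TE$ and $T_\pi E$ are stably isomorphic, so their Pontryagin classes agree. Fiber integration then yields
\[\sigma(E)=\int_E\mathcal{L}_m(TE)=\int_E\mathcal{L}_m(T_\pi E)=\int_S\kappa_{\mathcal{L}_m}(\pi),\]
reducing the theorem to showing that the generalized MMM class $\kappa_{\mathcal{L}_m}\in\oH^2(\BDiff(M);\bfQ)$ evaluates in $4\bfZ$ on the classifying class of every smooth oriented $M$-bundle over a closed oriented surface.

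Second, I invoke parametrized Pontryagin--Thom to factor this pairing universally: $\kappa_{\mathcal{L}_m}$ pulls back along $\BDiff(M)\to\Omega^\infty\MTSO(4m-2)$ from a universal class $\hat{\kappa}_{\mathcal{L}_m}\in\oH^2(\Omega^\infty\MTSO(4m-2);\bfQ)$. Using the Thom isomorphism together with Hurewicz (noting that $\pi_1(\MTSO(4m-2))=0$, since $\oH_{4m-1}(\BSO(4m-2);\bfZ)$ vanishes), the group $\oH_2(\Omega^\infty_0\MTSO(4m-2);\bfZ)$ is identified with $\oH_{4m}(\BSO(4m-2);\bfZ)$, and under this identification $\hat{\kappa}_{\mathcal{L}_m}$ corresponds to pairing with $\mathcal{L}_m\in\oH^{4m}(\BSO;\bfQ)$. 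By naturality of parametrized Pontryagin--Thom, it therefore suffices to pin down the image of $\oH_2(\BDiff(M);\bfZ)$ in $\oH_{4m}(\BSO(4m-2);\bfZ)$ and verify the required $4$-divisibility of $\mathcal{L}_m$ on it.

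This final verification is where the main obstacle lies. The $\mathcal{L}$-class is not $4$-integral on the entire lattice $\oH_{4m}(\BSO(4m-2);\bfZ)$—witness $[\bfC P^{2m}]$, on which $\mathcal{L}_m$ evaluates to $1$—so the argument cannot merely be universal on $\Omega^\infty\MTSO(4m-2)$. Instead, I expect one to trace the image through the stabilization $\MTSO(4m-2)\to\Sigma^{-4m+2}\MSO$, which identifies a class in $\oH_2(\BDiff(M);\bfZ)$ with the bordism class of the total space of the corresponding bundle, and then constrain the allowed classes by using that bundles over surfaces have total spaces satisfying an intersection-form condition (via multiplicativity of the signature modulo torsion defects). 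Combining the Hirzebruch signature theorem with a Rokhlin-type mod-$4$ congruence reading off the non-multiplicativity of the signature in this restricted setting should then force the value of $\mathcal{L}_m$ to land in $4\bfZ$.
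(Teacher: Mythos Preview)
Your plan correctly factors through $\Omega^\infty\MTSO(4m-2)$, but the second step contains errors and the third has no content. The claimed identification $\oH_2(\Omega^\infty_0\MTSO(4m-2);\bfZ)\cong\oH_{4m}(\BSO(4m-2);\bfZ)$ conflates homotopy and homology of the spectrum: the Thom isomorphism computes $\oH_2$ of the \emph{spectrum} $\MTSO(4m-2)$, whereas Hurewicz on the (putatively) simply connected space $\Omega^\infty_0$ would give $\pi_2\MTSO(4m-2)$, and these differ since $\MTSO(4m-2)$ is far from $1$-connective. Your justification for $\pi_1=0$ is also suspect: vanishing of $\oH_{4m-1}(\BSO(4m-2);\bfZ)$---itself not obvious---would at best say $\oH_1$ of the spectrum vanishes, not $\pi_1$. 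Your own counterexample reflects the confusion: $\bfC P^{2m}$ does not represent a class in $\pi_2\MTSO(4m-2)$, since its tangent bundle has nonzero Euler class and cannot stably split off $\varepsilon^2$. The paper does reduce to $\pi_2\MTSO(4m-2)$ (\cref{lemma:hurewiczdoesnotchange}), but via a different argument involving the $\pi_0$-action and the connected cover.

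The decisive gap is your final paragraph, which is a hope rather than an argument: invoking a ``Rokhlin-type mod-$4$ congruence reading off the non-multiplicativity of the signature'' to deduce $4\mid\sigma$ is circular---that congruence \emph{is} Meyer's theorem. The paper supplies the missing geometric input (\cref{theorem:Gollinger}): a class in $\pi_2\MTSO(4m-2)$ is represented, via Pontryagin--Thom, by a closed $4m$-manifold $N$ together with a rank-$(4m-2)$ bundle $E$ and a stable isomorphism $E\oplus\varepsilon^2\cong_s TN$. After adjusting $\chi(N)$ to zero by connected sums with products of spheres (which leaves $\sigma$ unchanged), both sides have trivial Euler class, so the isomorphism becomes unstable and $N$ carries two linearly independent vector fields; the classical relation between the signature and the vector-field problem then forces $4\mid\sigma(N)$. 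This vector-field argument is the key idea your proposal is missing.
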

\begin{proof}Let $\pi\colon E\ra S$ be a bundle as in the statement. The Euler characteristic $\chi(E)$ of $E$ is even, as this holds for oriented surfaces and the Euler characteristic is multiplicative in fiber bundles. Choosing a splitting of the short exact sequence $0\ra T_\pi E\ra TE\ra \pi^*TS\ra0$ of vector bundles defining the vertical tangent bundle $T_\pi E$ and a stable trivialization of $TS$, we obtain a stable isomorphism $TE\cong_s T_\pi E\oplus \varepsilon^2$. As $\chi(E)$ is even, we can form the connected sum of $E$ with products of spheres (not changing the signature) to obtain a manifold $E'$ for which $\chi(E')$ vanishes and which still satisfies $TE'\cong_s T_\pi E\oplus \varepsilon^2$. Now $\chi'(E')$ is trivial, so the Euler class of both bundles $TE'$ and $T_\pi E\oplus \varepsilon^2$ vanish, so they are even unstably isomorphic. Consequently, the manifold $E'$ admits two pointwise linearly independent vector fields, so the classical relation between the signature and the vector field problem (see e.g. \cite[Thm IV.2.7]{LawsonMichelsohn}) implies that the signature of $E'$ is divisible by $4$, as claimed. 
\end{proof}

\subsection{Parametrized Pontryagin--Thom theory}\label{section:parametrizedPT}
Given a fibration $\theta\colon B\rightarrow \BSO(2n)$, a \emph{$\theta$-structure} on an oriented  vector bundle $E\ra\BSO(2n)$ is a lift $E\ra B$ along $\theta$. Define the spectrum $\MT\theta$ as the Thom spectrum $\Thom(-\theta^*\gamma_{2n})$ of the inverse of the pullback of the canonical bundle $\gamma_{2n}$ over $\BSO(2n)$. These spectra---in particular the case $\theta=\id$, commonly denoted as $\MT\theta=\MTSO(2n)$---are natural recipients of parametrized Pontryagin--Thom constructions. More precisely, for a smooth bundle $\pi\colon E\ra W$ with closed  $2n$-dimensional fibers and a $\theta$-structure on its vertical tangent bundle, a parametrized version of the Pontryagin--Thom collapse map gives a canonical homotopy class $W\ra\Omega^\infty\MT\theta$ (see e.g.\,\cite{GMTW}). In particular, for an oriented closed $2n$-manifold $M$, this results in a map of the form\begin{equation}\label{equation:parametrizedPTmap}\BDiff(M)\ra\Omega^{\infty}\MTSO(2n),\end{equation} called the \emph{parametrized Pontryagin--Thom map}. For $M$ a surface, the celebrated theorem of Madsen--Weiss \cite{MadsenWeiss}, combined with classical stability results of Harer \cite{Harer} (improved by Boldsen \cite{Boldsen} and Randal-Williams \cite{RWresolutions}), implies that, depending on the genus of $M$, the map \eqref{equation:parametrizedPTmap} provides a good homological approximation of $\BDiff(M)$. It follows from work by Galatius--Randal-Williams \cite{GRWstable,GRWI,GRWII} that this holds for simply connected manifolds $M$ in higher dimensions as well, after replacing $\Omega^\infty\MTSO(2n)$ with a refinement depending on $M$. To explain their program in the special case needed for our purposes, we assume that $M$ is $(n-1)$-connected and $n$-parallelizable; that is, its tangent bundle $M\ra\BSO(2n)$ admits a $\theta_n$-structure for the $n$-connected cover \[\theta_n\colon\BSO(2n)\langle n\rangle\ra \BSO(2n).\] Fix an embedded disk $D^{2n}\subseteq M$ and note that the orientation on $D^{2n}$ extends uniquely to a $\theta_n$-structure $\ell_{D^{2n}}$ on the tangent bundle of $D^{2n}$. For every smooth $M$-bundle $\pi\colon E\ra W$ with a trivialized $D^{2n}$-subbundle, the $\theta_n$-structure on the vertical tangent bundle of the $D^{2n}$-subbundle induced by $\ell_{D^{2n}}$ extends uniquely to a $\theta_n$-structure on the vertical tangent bundle of $\pi$ by obstruction theory. In the universal case, this results in a map of the form \[\BDiffuo(M,D^{2n})\ra\Omega^{\infty}\MT\theta_n,\] which hits a particular component of $\Omega^\infty\MT\theta_n$, denoted by $\Omega^\infty_M\MT\theta_n$. Here $\Diffuo(M,D^{2n})\subseteq\Diff(M)$ is the subgroup of diffeomorphisms fixing $D^{2n}$ pointwise, so $\BDiffuo(M,D^{2n})$ classifies smooth $M$-bundles with a trivialized $D^{2n}$-subbundle.

\begin{thm}[Boldsen, Galatius--Randal-Williams, Harer, Madsen--Weiss]\label{theorem:parametrizedpontryaginthom} For a closed, $(n-1)$-connected, $n$-parallelizable $2n$-manifold $M$, the parametrized Pontryagin--Thom map \[\BDiffuo(M,D^{2n})\ra\Omega^\infty_M\MT\theta_n\] induces an isomorphism on homology in degrees $3*\le{2g(M)-2}$ if $2n=2$, and for $2*\le{g(M)-3}$ if $2n\ge6$. Furthermore, it induces an epimorphism in degrees $3*\le 2g(M)$ if $2n=2$, and for $2*\le{g(M)-1}$ if $2n\ge6$.
\end{thm}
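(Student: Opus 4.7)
The plan is to assemble this statement from two established packages of results: the identification of the stable homology of $\BDiffuo(-,D^{2n})$ with that of the appropriate infinite loop space, due to Madsen--Weiss for $2n=2$ and to Galatius--Randal-Williams for $2n\ge 6$, together with the accompanying homological stability theorems, due to Harer--Boldsen--Randal-Williams in dimension $2$ and to Galatius--Randal-Williams in higher dimensions. The parametrized Pontryagin--Thom map is the common glue: it is natural with respect to genus-stabilization, so any homological isomorphism between stabilization maps propagates into the stated ranges once the stable limit is identified.

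For $2n=2$, the Madsen--Weiss theorem identifies the stable homology of $\BDiffuo(\Sigma_\infty, D^2)$ with that of $\Omega^\infty_0\MTSO(2)$ via the parametrized Pontryagin--Thom map. Boldsen's sharpening of Harer's homological stability theorem asserts that the genus-stabilization $\BDiffuo(\Sigma_g, D^2) \to \BDiffuo(\Sigma_{g+1}, D^2)$ is a homology isomorphism in degrees $3* \le 2g - 2$ and a surjection in degrees $3* \le 2g$. Combining the two gives the stated ranges for surfaces.

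For $2n\ge 6$, Galatius--Randal-Williams prove essentially the statement at hand: for an $(n-1)$-connected, $n$-parallelizable $2n$-manifold $M$ with $g(M)\ge g$, the parametrized Pontryagin--Thom map to $\Omega^\infty_M\MT\theta_n$ is a homology isomorphism in degrees $2*\le g-3$ and a surjection in degrees $2*\le g-1$. Their argument combines a group-completion theorem---identifying the colimit over connect-summing with $S^n\times S^n$ with the appropriate component of $\Omega^\infty\MT\theta_n$---with their homological stability theorem for the stabilization maps $\BDiffuo(M, D^{2n}) \to \BDiffuo(M\sharp(S^n\times S^n), D^{2n})$. Uniformity over all $M$ of genus at least $g$ (rather than only $M=W_g$) is built into their framework by writing $M=W_{g(M)}\sharp N$ and stabilizing inside the $W_{g(M)}$-summand, which leaves the complement $N$ untouched.

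Since the proof reduces to a citation of existing results, the main obstacle is bookkeeping: one has to verify that the tangential structure $\theta_n\colon\BSO(2n)\langle n\rangle\to\BSO(2n)$ meets the standing admissibility hypotheses of the Galatius--Randal-Williams theorems, and that the ranges stated in \cite{GRWstable,GRWI,GRWII} line up with the indices written here. The former is automatic from $n$-parallelizability, which guarantees that $TM$ admits a $\theta_n$-structure, together with $(n-1)$-connectivity, which controls the set of such structures via obstruction theory; the latter is a direct comparison of indices, keeping track of the fact that the relevant connectivity estimates in the Galatius--Randal-Williams framework are phrased in terms of the genus $g(M)$, which is exactly the quantity appearing in the theorem.
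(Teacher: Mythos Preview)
Your proposal is correct and matches the paper's treatment: the theorem is stated there as a known result attributed to the named authors, with no proof given beyond the references \cite{MadsenWeiss,Harer,Boldsen,RWresolutions,GRWstable,GRWI,GRWII}. Your explanation of how the stable identification (Madsen--Weiss, Galatius--Randal-Williams) combines with the homological stability theorems (Harer--Boldsen--Randal-Williams, Galatius--Randal-Williams) to yield the stated ranges is exactly the intended assembly, and your remarks on the admissibility of $\theta_n$ and the bookkeeping of indices are the only substantive checks needed.
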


\subsection{Formal bundles with highly connected fibers}\label{sect:proofofThmA}
To use the above recalled theory to determine the image of the morphism $\Bord_2(\BDiff(M,D^{2n})) \ra\Bord_{2n+2}$ for a closed $(n-1)$-connected, $n$-parallelizable (or, equivalently, almost parallelisable) $2n$-manifold $M$, it is convenient to factor this morphism as a composition
\newcommand{\shortArrow}{%
\text{\ }\parbox{.3cm}{\tikz{\draw[->](0,0)--(.3cm,0);}}\text{\ }
}
\begin{equation}\label{equ:factorisation}\Bord_2(\BDiffuo(M,D^{2n})) \shortArrow\Bord_2(\Omega^\infty\MT\theta_n) \shortArrow\Bord_2(\MT\theta_n) \shortArrow \Bord_2(\Sigma^{-2n}\MO\langle n\rangle) \shortArrow\Bord_{2n+2},\end{equation}where the first morphism is induced by the parametrized Pontryagin--Thom map \eqref{equation:parametrizedPTmap}, the second one by the counit $\Sigma^\infty_+\Omega^\infty\MT\theta_n\ra\MT\theta_n$, the third by the stabilization map $\MT\theta_n\ra\Sigma^{-2n}\MO\langle n\rangle$  to the Thom spectrum of the $n$-connected cover $\BO\langle n\rangle\ra\BO$ (cf.\,\cite[Ch.\,3]{GMTW}), and the fourth by the natural map $\MO\langle n\rangle\ra\MSO$ and the multiplication $\MSO\wedge\MSO\ra\MSO$. Before turning to the proof of \cref{bigtheorem:bordismimage}, we show a preparatory lemma concerning the upper horizontal map in the commutative diagram 
\begin{center}
\begin{tikzcd}[column sep=0.5cm, row sep=0.35cm] 
\pi_2\MT\theta_n\arrow[r,"s_*"]\arrow[d,swap]&\Omega^{\langle n\rangle}_{2n+2}\arrow[d]\\
\Bord_2(\Omega^\infty\MT\theta_n)\arrow[r]&\Bord_{2n+2}.
\end{tikzcd}
\end{center}
induced by the stabilisation map $s\colon \MT\theta_n\ra\Sigma^{-2n}\MO\langle n\rangle$ and the Hurewicz homomorphism.

\begin{lem}\label{lem:pi2lemma}The sequence \[\pi_2\MT\theta_n\xrightarrow{s_*}\Omega^{\langle n\rangle}_{2n+2}\ra\begin{cases}0&\mbox{for }n\neq1,3,7\\\bfZ/4&\mbox{for }n=1,3,7\end{cases}\ra0\] is exact, where the middle arrow is for $n=1,3,7$ given by the signature modulo $4$.
\end{lem}
\begin{proof}Galatius--Randal-Williams \cite[Lem.\,5.2, 5.5, 5.6]{GRWabelian} showed that the cokernel of $s_*$ is isomorphic to $\bfZ/4$ for $n=3,7$ and vanishes for $n\neq1,3,7$, which implies the case $n\neq1,3,7$. The manifolds $\bfC P^2$, $\bfH P^2$, and $\bfO P^2$ have signature $1$ and define classes in $\Omega^{\langle n\rangle}_{2n+2}$ for $n=1,3,7$ respectively, so the second claim follows from showing that the minimal signature realised by classes in $\pi_2\MT\theta_n$ for $n=1,3,7$ is $4$. For $n=1$ this follows from \cite[Thm 2.0.10]{Gollinger}. The cited result also implies that classes in the image of $s_*$ have signature divisible by $4$ for all $n$, so the cases $n=3,7$ follow from the calculation of the cokernel of $s_*$.
\end{proof}

Having done the necessary preparations, we prove 
\cref{bigtheorem:bordismimage}.

\begin{proof}[Proof of \cref{bigtheorem:bordismimage}]Since $\Omega^{\langle 1\rangle}_{4}=\Bord_4$, the first claim follows in the case $n=1$ from \cref{theorem:signaturedivisiblyby4}. In the other cases, \cref{theorem:signaturedivisiblyby4} shows that it suffices to show that every total space $E$ of an oriented $M$-bundle $\pi\colon E\ra S$ over an oriented surface is $n$-connected up to bordism, i.e., lifts to $\Omega^{\langle n \rangle}_{2n+2}$. To see this, note that obstruction theory shows that the space of lifts
\begin{center}
\begin{tikzcd}
*\rar\dar&\BSO\langle n\rangle\dar\\
M\arrow[ur,dashed]\rar["T^sM",swap]&\BSO
\end{tikzcd}
\end{center} is contractible, where the lower horizontal map represents the stable oriented tangent bundle of $M$ and the upper horizontal one is any lift of $T^sM$ over a fixed point $*\in M$. Together with the fact that $\pi$ admits a section (which follows from another application of obstruction theory), this implies that the stable oriented vertical tangent bundle $T^s_\pi E\colon E\ra\BSO$ admits a lift to $\BSO\langle n\rangle$. But $S$ is stably parallelizable and  the vertical tangent bundle $T_\pi E$ is stably equivalent to $TE$, so $E$ is bordant to an $n$-connected manifold by doing surgery.

To show that the image of $\Bord_2 \left(\BDiff(M)\right) \to \Bord_{2n+2}$ actually equals the subgroup of the statement in the presence of high genus, note that by \cref{theorem:parametrizedpontryaginthom}, it suffices to show that the image of $\Bord_2(\Omega^\infty_M\MT\theta_n)\ra \Bord_{2n+2}$ is as claimed. This follows from \cref{lem:pi2lemma}, together with \cref{lemma:hurewiczdoesnotchange} below.
\end{proof}

\begin{rem}The previous proof shows that the conclusion of \cref{bigtheorem:bordismimage} holds equally well for $\BDiffuo(M,D^{2n})$ instead of $\BDiff(M)$.
\end{rem}

\begin{lem}\label{lemma:hurewiczdoesnotchange}Let $\Omega^\infty_\bullet\MT\theta_n\subseteq\Omega^\infty\MT\theta_n$ be a path component. The images of \[\pi_2\MT\theta_n\ra\Bord_{2n+2}\quad\text{and}\quad\Bord_2\left(\Omega^\infty_\bullet\MT\theta_n\right)\ra\Bord_{2n+2}\]  agree.
\end{lem}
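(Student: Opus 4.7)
I would verify the two inclusions of images separately. Writing $Y:=\Omega^\infty_\bullet\MTSO(2n)$, the inclusion of the image of $\pi_2\MTSO(2n)\to\Bord_{2n+2}$ into that of $\Bord_2(Y)\to\Bord_{2n+2}$ follows by naturality: the canonical isomorphism $\pi_2\MTSO(2n)\cong\pi_2(Y)$ composed with the Hurewicz homomorphism $\pi_2(Y)\to\oH_2(Y;\bfZ)$ factors the first map through the second, using the Atiyah--Hirzebruch identification $\Bord_2(Y)\cong\oH_2(Y;\bfZ)$, which is valid because $\MSO_1$ and $\MSO_2$ vanish.

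The reverse inclusion is the substantive part. Translating within the infinite loop space $\Omega^\infty\MTSO(2n)$ identifies $Y$ with the basepoint component as an H-space, so I may represent a class in $\Bord_2(Y)$ by $(\Sigma,f\colon\Sigma\to Y)$ with $\Sigma$ a connected oriented surface of genus $g$ and standard generators $a_i,b_i$ of $\pi_1(\Sigma)$. Set $\alpha_i:=f_*(a_i)$ and $\beta_i:=f_*(b_i)$ in $\pi_1(Y)=\pi_1\MTSO(2n)$, which is abelian since $Y$ is an infinite loop space. Using the H-space structure, I would construct a comparison map $\phi\colon\Sigma\to Y$ with $\phi_*=f_*$ on $\pi_1$ and $\phi_*[\Sigma]=\sum_i\alpha_i\cdot\beta_i$ in $\oH_2(Y;\bfZ)$, where $\cdot$ denotes the Pontryagin product; concretely, $\phi$ is obtained by writing $\Sigma\cong\sharp^gT^2$ and combining the Pontryagin-product maps $T^2_i\xra{(\alpha_i,\beta_i)}Y\times Y\xra{\mu}Y$ via the connected-sum structure, the relator $\prod_i[\alpha_i,\beta_i]$ being automatically trivial in the abelian group $\pi_1(Y)$. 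Since $f$ and $\phi$ agree on $\pi_1$, the H-space difference $f-\phi$ restricts to a null-homotopic map $\Sigma^{(1)}\to Y$ and hence factors as $\tilde{h}\circ p$ for some $\tilde{h}\colon S^2\to Y$ and the collapse $p\colon\Sigma\to\Sigma/\Sigma^{(1)}\simeq S^2$. Unpacking $(f-\phi)_*[\Sigma]$ using the diagonal of $\Sigma$ and the Pontryagin structure on $\oH_*(Y;\bfZ)$ then yields a decomposition
\[f_*[\Sigma]=\tilde{h}_*[S^2]+\sum_j\xi_j\cdot\eta_j\]
in $\oH_2(Y;\bfZ)$ with the first summand the Hurewicz image of $[\tilde{h}]\in\pi_2(Y)\cong\pi_2\MTSO(2n)$ and the second a sum of Pontryagin products of classes $\xi_j,\eta_j\in\pi_1(Y)$.

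It then remains to verify that such Pontryagin products vanish in $\Bord_{2n+2}$. By the GMTW identification, the H-space multiplication on $\Omega^\infty\MTSO(2n)$ corresponds to disjoint union of families of oriented $2n$-manifolds; hence the Pontryagin product $\xi\cdot\eta$ of classes in $\pi_1\MTSO(2n)$ represented by families $E_\xi,E_\eta\to S^1$ classifies the family $(E_\xi\times S^1)\sqcup(S^1\times E_\eta)\to T^2$, whose total space is null-bordant in $\Bord_{2n+2}$ because $[S^1]=0$ in $\Bord_1$. Consequently $f_*[\Sigma]$ and $\tilde{h}_*[S^2]$ have the same image in $\Bord_{2n+2}$, which lies in the image of $\pi_2\MTSO(2n)\to\Bord_{2n+2}$, as desired.

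The hardest step is carrying out the H-space decomposition of $f_*[\Sigma]$ cleanly---identifying the correction terms from the diagonal of $\Sigma$ as Pontryagin products and verifying the signs---and matching the intrinsic H-space structure of $\Omega^\infty\MTSO(2n)$ with the geometric disjoint-union operation on families of $2n$-manifolds with enough precision to compute the total space of a Pontryagin-product family over $T^2$.
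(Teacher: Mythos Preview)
Your approach is correct and genuinely different from the paper's. Both arguments must first reduce from an arbitrary component $Y$ to the basepoint component; the paper does this via a $\pi_0\MTSO(2n)$-equivariance argument together with $\Bord_2=0$, and you use the same idea implicitly when you invoke the ``canonical isomorphism $\pi_2\MTSO(2n)\cong\pi_2(Y)$'' and when you translate $Y$ to the unit component---you should note that translation by a class $[N]\in\pi_0\MTSO(2n)$ alters the image of a surface $S$ in $\Bord_{2n+2}$ by the bordism class of a product $N\times S$, which vanishes because $\Bord_2=0$.

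The substantive difference lies in the hard inclusion. The paper argues entirely on the level of spectra: it observes that $\Omega^\infty_0\MTSO(2n)=\Omega^\infty\tau_{\ge1}\MTSO(2n)$, and since the unit $\bfS\to\MSO$ is $1$-connected and $\tau_{\ge1}\MTSO(2n)$ is $1$-connective, the Hurewicz map $\pi_2\MTSO(2n)\to\MSO_2(\tau_{\ge1}\MTSO(2n))$ is onto, so the image of $\Bord_2(\Omega^\infty_0\MTSO(2n))$ in $\Bord_2(\MTSO(2n))$ is already contained in the image of $\pi_2\MTSO(2n)$. This is a two-line argument once the reduction to the unit component is done, and it never unpacks $H_2(Y)$. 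Your route instead decomposes any class $f_*[\Sigma]\in H_2(Y)$ explicitly as a Hurewicz image plus Pontryagin products of degree-one classes, and then kills the Pontryagin products geometrically: the Pontryagin--Thom description of $[T^2,\mu\circ(\xi\times\eta)]$ yields a $(2n+2)$-manifold of the form $(N_\xi\times S^1)\sqcup(S^1\times N_\eta)$, which is null-bordant since $[S^1]=0\in\Bord_1$. What your approach buys is a transparent geometric explanation for \emph{why} the non-spherical part of $H_2(Y)$ dies; what the paper's approach buys is that it sidesteps entirely the diagonal bookkeeping and the matching of the loop-space product with disjoint union that you correctly flag as the delicate points.
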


\begin{proof}We first show that the image of the second morphism does not depend on the chosen path component. For this, note that for an oriented surface $S$, the composition of the natural map from the group of homotopy classes $[S,\Omega^\infty\MT\theta_n]$ to $\Bord_2(\Omega^\infty\MT\theta_n)$ with the map $\Bord_2(\Omega^\infty\MT\theta_n)\ra\Bord_{2n+2}$ is $\pi_0(\MT\theta_n)$-equivariant, where $\pi_0\MT\theta_n$ acts on the domain in the obvious way and on the codomain via the composition $\pi_0\MT\theta_n\ra\Bord_{2n}\ra\Bord_{2n+2}$, the first map being induced by the stabilization map $\MT\theta_n\ra\Sigma^{-2n}\MO\langle n\rangle$ and the natural map $\MO\langle n\rangle\ra\MSO$, and the second one by taking products with $S$. This equivariance can, for instance, be seen by using the geometric description of $[S,\Omega^\infty\MT\theta_n]$ provided by classical Pontryagin--Thom theory. Since $\Bord_2$ is trivial, the action of $\pi_0\MT\theta_n$ on $\Bord_{2n+2}$ is trivial, which implies that it suffices to show the claim for the unit component $\Omega_0^\infty\MT\theta_n$. By comparing $\MT\theta_n$ to its connected cover and using that the unit $\bfS\ra\MSO$ is $1$-connected, one concludes that the images of $\pi_2\MT\theta_n$ and $\Bord_2(\Omega^\infty_0\MT\theta_n)$ in $\Bord_2(\MT\theta_n)$ agree, which, given the factorization \eqref{equ:factorisation}, implies the claim. 
\end{proof}


\section{Bordism classes of highly connected manifolds}\label{section:highly_connected_bordism}
This section is concerned with the image of the natural map \begin{equation}\label{equation:bordismrestriction}\Omega_{2n+2}^{\langle n \rangle} \ra \Bord_{2n+2}\end{equation} from highly connected to oriented bordism, which is by \cref{bigtheorem:bordismimage} closely related to smooth bundles over surfaces with highly connected almost parallelizable fibers. As mentioned in the introduction, this morphism is trivial unless $2n+2=4m$, since smooth oriented bordism classes are detected by Stiefel--Whitney and Pontryagin numbers, and both of these vanish for highly connected $(4m+2)$-manifolds: their cohomology is concentrated in degrees $0$, $2m+1$, and $4m+2$, and $\oH^{\ast}(\BO;\bf Z/2)$ is generated by $w_{2^{i}}$ for $i \geq 0$ as a module over the Steenrod algebra. As homotopy spheres are stably parallelizable, the morphism \eqref{equation:bordismrestriction} moreover factors over the quotient by the group of homotopy spheres $\Omega_{2n+2}^{\langle n \rangle}/\Theta_{2n+2}$, which we describe explicitly in the first part of this section by combining work of Brumfiel, Kervaire--Milnor, Schultz, Stolz, and Wall. Using this, we determine the image of \eqref{equation:bordismrestriction} in terms of characteristic numbers and to derive divisibility constraints for the signature and the $\hat{A}$-genera of highly connected manifolds. Throughout this section, we implicitly require all manifolds to be smooth and exclude the case $m=1$.

\subsection{Wall's classification of highly connected almost closed manifolds}
A compact manifold is \emph{almost closed} if its boundary is a homotopy sphere. We denote by $A^{\langle 2m-1\rangle}_{4m}$ the group of oriented, almost closed, $(2m-1)$-connected $4m$-manifolds, up to $(2m-1)$-connected oriented bordism restricting to an $h$-cobordism on the boundary; the group structure is induced by boundary connected sum. This group receives a map from $\Omega^{\langle 2m-1\rangle}_{4m}$ given by cutting out the interior of an embedded $4m$-disk. This fits into an exact sequence 
\begin{equation}\label{equation:WallSES}
\Theta_{4m}\ra\Omega^{\langle 2m-1\rangle}_{4m}\ra A^{\langle 2m-1\rangle}_{4m}\xra{\partial}\Theta_{4m-1}.
\end{equation} 
The first morphism maps a homotopy sphere to its bordism class, and the last one assigns to an almost closed manifold its boundary. From his pioneering work on the classification of highly connected manifolds, Wall \cite{Wall2n} derived a complete description of the groups $A^{\langle 2m-1 \rangle}_{4m}$. For us, the outcome is most conveniently phrased in terms of two particular manifolds $P$ and $Q$, which play a central role in the remainder of this section.

\begin{description}[labelindent=0cm,leftmargin=0cm,itemsep=1ex,topsep=1ex,font=\normalfont\itshape]
\item[{Milnor's $E_8$-plumbing $P$.}]We denote by $P$ \emph{Milnor's $E_8$-plumbing}, i.e., the parallelizable manifold of dimension $4m$ that arises from plumbing eight copies of the disk bundle of the tangent bundle of the $2m$-sphere so that the intersection form of $P$ is isomorphic to the $E_8$-form (see e.g.\,\cite[Ch.\,V.2]{Browder}). Since the latter has signature $8$, the manifold $P$ does as well.

\item[{The plumbing $Q$.}]Let $Q$ be the plumbing of two copies of a $2m$-dimensional linear disk bundle over the $2m$-sphere that generates the image of $\pi_{2m}\BSO(2m-1)$ in $\pi_{2m}\BSO(2m)$. This bundle can be characterized equivalently as having vanishing Euler number and representing a generator of $\pi_{2m}\BSO$ for $m\neq2,4$, and twice a generator for $m=2,4$ (cf.\,\cite[§1A)]{Levine}). Via the isomorphism $\oH^{2m}(S^{2m})\oplus\oH^{2m}(S^{2m})\cong \oH^{2m}(Q)$ induced by the inclusion of the $2m$-skeleton, the intersection form of $Q$ is given by $\bigl(\begin{smallmatrix}0 & 1\\ 1 & 0\end{smallmatrix}\bigr)$, so it has vanishing signature. For $m=2k$ even, the $k$th Pontryagin class of a generator of $\pi_{2m}\BSO$ is $a_k(2k-1)!1^*\in\oH^{4k}(S^{4k})$ (see e.g.\,\cite[Thm 3.8]{Levine}), where $a_k$ equals $1$ for $k$ even and $2$ otherwise, and $1^*$ denotes the Poincar\'e dual to $1\in\oH_0(S^{4k})$. We then compute \begin{equation}\label{equation:pontryaginclass}p_k^2(Q,\partial Q)=2\lambda_k^2a_k^2(2k-1)!^2\cdot1^*\in\oH^{8k}(Q,\partial Q),\end{equation} with $\lambda_k$ being $1$ if $k\neq1,2$ and $2$ elsewise.
\end{description}

The boundaries of both plumbings $P$ and $Q$ are homotopy spheres (see e.g.\,\cite[V.2.7]{Browder}). We denote them by $\Sigma_P$ and $\Sigma_Q$, respectively.

\begin{thm}[Wall]\label{theorem:Wall} The bordism group $A^{\langle 2m-1\rangle}_{4m}$ satisfies
\[A^{\langle 2m-1\rangle}_{4m}\cong\begin{cases}
\bfZ\oplus \bfZ/2&\mbox{if }m\equiv1\Mod{4}\\
\bfZ&\mbox{if }m\equiv3\Mod{4} \\
\bfZ\oplus\bfZ&\mbox{if }m\equiv0\Mod{2},
\end{cases}\] where the first summand is generated by $P$, except for $m=2,4$ where it is generated by $\bfH P^2$ and $\bfO P^2$, respectively. The second summand in the cases $m\not{equiv}3\Mod{4}$ is generated by $Q$.
\end{thm}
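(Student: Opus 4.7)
The strategy is to combine Wall's classification of $(2m-1)$-connected $4m$-dimensional handlebodies \cite{Wall2n} with an analysis of which invariants survive the coarser relation of $(2m-1)$-connected bordism restricting to an $h$-cobordism on the boundary.

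First, I would recall that every class in $A^{\langle 2m-1\rangle}_{4m}$ can be represented by a handlebody built from a single $0$-handle and a collection of $2m$-handles. By Wall's main theorem in \cite{Wall2n}, such a handlebody is determined up to boundary connected sum and diffeomorphism by the pair $(q,\alpha)$ consisting of its symmetric intersection form $q$ on the free abelian group $\oH_{2m}(M)$ and a function $\alpha\colon \oH_{2m}(M)\to\pi_{2m-1}(\SO(2m))$ classifying the normal bundles of the cores of the $2m$-handles. The two pieces of data are linked by the relation $\partial\circ\alpha = q(-,-)$, where $\partial\colon\pi_{2m-1}(\SO(2m))\to\pi_{2m-1}(S^{2m-1})\cong\bfZ$ is the boundary map in the long exact sequence of the unit sphere fibration. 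I would then analyse $\pi_{2m-1}(\SO(2m))$ via this fibre sequence and the stabilisation map $\SO(2m)\to\SO$, obtaining a decomposition into a free part of known rank (one for $m$ odd, two for $m$ even) plus a possible $\bfZ/2$-summand generated by a Whitehead-type element precisely when $m\equiv1\Mod 4$.

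Next, I would pass to the bordism equivalence relation by identifying which invariants of the algebraic data survive. The signature of $q$ clearly does, and accounts for the $\bfZ$-summand always present. For $m$ even, the Pontryagin number $p_m^2[M,\partial M]$ yields a second $\bfZ$-valued invariant; the computation recorded in \eqref{equation:pontryaginclass} already confirms that it is nonzero on $Q$ and an independent assessment shows that it vanishes on $P$, so together with the signature this gives the claimed $\bfZ\oplus\bfZ$. For $m\equiv1\Mod4$, the $\bfZ/2$-summand of $\pi_{2m-1}(\SO(2m))$ contributes an extra mod-$2$ invariant of the normal data that is preserved by the bordism relation and is realised geometrically by $Q$. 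For $m\equiv3\Mod4$ the corresponding torsion summand is absent, so only the $\bfZ$ generated by $P$ remains. The non-existence of further invariants would follow from a surgery argument: any almost closed handlebody whose signature, Pontryagin, and mod-$2$ invariants vanish admits, after boundary-stabilisation by hyperbolic summands, a cancellation of all its $2m$-handles, hence is null-bordant in the required sense.

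Finally, I verify the generators. The $E_8$-plumbing $P$ has signature $8$ and vanishing $p_m^2$, so it generates the signature-$\bfZ$-summand whenever $m\ne2,4$; in the two exceptional dimensions $\bfH P^2$ and $\bfO P^2$ have signature $1$ and take over this role. The plumbing $Q$ has vanishing signature but, by \eqref{equation:pontryaginclass}, nonzero $p_m^2$ for $m$ even, giving the second summand, while for $m\equiv1\Mod4$ the same manifold realises the $\bfZ/2$-invariant. The main obstacle in this plan is the surgery step of the preceding paragraph: one must carefully track how the pair $(q,\alpha)$ transforms under bordisms preserving the $(2m-1)$-connectivity and the boundary, verifying that no hidden invariants are lost. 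The low-dimensional cases $m=2,4$ require particular care, since there the structure of $\pi_{2m-1}(\SO(2m))$ is distorted by the existence of the Hopf elements, forcing the replacement of $P$ by $\bfH P^2$ or $\bfO P^2$ as the generator of the signature summand.
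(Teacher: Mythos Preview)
Your outline is broadly correct and follows Wall's original line of argument, which is precisely what the paper relies on. There are two points worth noting, however.

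First, a small indexing slip: on a $4m$-manifold with $m=2k$ even, the relevant Pontryagin number is $p_k^2 = p_{m/2}^2$, not $p_m^2$ (the class $p_m$ already has degree $4m$, so its square does not live on $M$). Your reference to \eqref{equation:pontryaginclass} is indeed about $p_k^2(Q)$.

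Second, the paper's proof is considerably more economical than your sketch and differs in how it pins down the generators. Rather than redoing the handlebody classification and surgery analysis, the paper simply cites \cite[Thm.~2]{Wall2n} and \cite[Thm.~11]{Wall2n+1} for the isomorphism type of $A^{\langle 2m-1\rangle}_{4m}$, and then invokes the exact sequence
\[
0 \longrightarrow \langle Q\rangle \longrightarrow A^{\langle 2m-1\rangle}_{4m} \xrightarrow{\ \sigma\ } \bfZ
\]
from \cite[p.~295]{Wall2n+1}. The generator of the image of $\sigma$ is then determined by the observation that for $m\neq 2,4$ the intersection form of any class in $A^{\langle 2m-1\rangle}_{4m}$ is even, forcing the signature to be divisible by $8=\sigma(P)$, while for $m=2,4$ the projective planes $\bfH P^2$ and $\bfO P^2$ realise signature $1$. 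Your route via the pair $(\sigma,p_{m/2}^2)$ and the surgery cancellation argument would also work, but it is essentially a reconstruction of what Wall already proved; the paper bypasses this by quoting him directly and extracting the generators from the signature alone. The evenness-of-the-intersection-form step is what replaces your detailed analysis of $\pi_{2m-1}(\SO(2m))$ in the identification of the $P$-generator.
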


\begin{proof}
The statement regarding the isomorphism type of the group  $A^{\langle 2m-1\rangle}_{4m}$ follows from \cite[Thm 2]{Wall2n} and \cite[Thm 11]{Wall2n+1}. Denoting by $\langle Q\rangle\subseteq A^{\langle 2m-1\rangle}_{4m}$ the subgroup generated by $Q$, it follows from the discussion in \cite[p.\,295]{Wall2n+1} that there is an exact sequence
\[0\ra\langle Q\rangle\ra A^{\langle 2m-1\rangle}_{4m}\ra\bfZ\] whose last morphism is induced by the signature. As $\bfH P^2$ and $\bfO P^2$ have signature $1$, the cases $m=2,4$ follow. The other cases are concluded by observing that the intersection form associated to a manifold representing a class in $A^{\langle 2m-1\rangle}_{4m}$ is even for $m\neq 2,4$, so has signature divisible by $8$---the signature of the $E_8$-plumbing.\end{proof}

To treat the different cases for $m$ even in a uniform manner, it is convenient to use a basis of $A^{\langle 2m-1\rangle}_{4m}$ that is different from the one described in the previous theorem. 

\begin{lem}\label{lemma:alternativebasis}\ 
\begin{enumerate}
\item Two almost closed $8k$-manifolds $M$ and $N$ define the same class in $A_{8k}^{\langle 4k-1\rangle}$ if and only if $\sigma(M)=\sigma(N)$ and $p_k^2(M)=p_k^2(N)$.
\item The classes $8\cdot \bfH P^2$ and $8\cdot \bfO P^2$ in $A_{8k}^{\langle 4k-1\rangle}$ equal $P+Q$ for $k=1,2$, respectively.
\item The group $A_{8k}^{\langle 4k-1\rangle}\cong \bfZ\oplus \bfZ$ is generated by $P$ and $Q$ for $k\neq1,2$, by $P$ and $\bfH P^2$ for $k=1$, and by $P$ and $\bfO P^2$ for $k=2$.
\end{enumerate}
\end{lem}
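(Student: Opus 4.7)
The plan is to introduce the homomorphism
\[
\Phi = (\sigma, p_k^2) \colon A_{8k}^{\langle 4k-1\rangle} \longrightarrow \bfZ \oplus \bfZ,
\]
where $p_k^2$ is evaluated against the fundamental class $[M,\partial M]$; this makes sense since $\partial M$ is a homotopy sphere, so its stable tangent bundle is trivial and the Pontryagin classes canonically lift to $\oH^{4k}(M,\partial M)$ (well-defined on bordism classes by standard arguments, since the relevant bordisms are of $(2m-1)$-connected manifolds with $h$-cobordism boundaries). By \cref{theorem:Wall} the group $A_{8k}^{\langle 4k-1\rangle}$ is free of rank $2$, so it suffices to show $\Phi$ is injective to conclude part~(i); for parts~(ii) and~(iii), $\Phi$-values are then used to identify specific classes.

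For (i), I would evaluate $\Phi$ on the generators given by \cref{theorem:Wall}. Since $P$ is almost parallelizable, its Pontryagin classes below the top degree vanish, so $\Phi(P)=(8,0)$. For $Q$, the signature is $0$ and $p_k^2(Q,\partial Q)$ was computed in \eqref{equation:pontryaginclass} to equal $2\lambda_k^2 a_k^2 (2k-1)!^2$; thus $\Phi(Q)=(0, 2\lambda_k^2 a_k^2 (2k-1)!^2)$. For $k\neq 1,2$, these two vectors are linearly independent in $\bfZ^2$, so $\Phi$ is injective on the subgroup generated by $P$ and $Q$, which by \cref{theorem:Wall} is all of $A_{8k}^{\langle 4k-1\rangle}$. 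For $k=1,2$, one replaces $P$ by $\bfH P^2$ or $\bfO P^2$, whose standard Pontryagin numbers $p_1^2(\bfH P^2)=4$ and $p_2^2(\bfO P^2)=36$ yield vectors linearly independent from $\Phi(Q)$. Injectivity of $\Phi$ again follows.

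For (ii), I would simply verify the identity on $\Phi$. For $k=1$ one has $\Phi(8\cdot\bfH P^2)=(8,32)$, while $\Phi(P+Q)=(8,0)+(0,2\cdot 2^2\cdot 2^2\cdot 1)=(8,32)$ (recalling $\lambda_1=a_1=2$). For $k=2$, $\Phi(8\cdot\bfO P^2)=(8,288)$ and $\Phi(P+Q)=(8,0)+(0,2\cdot 2^2\cdot 1^2\cdot 3!^2)=(8,288)$ (now $\lambda_2=2$, $a_2=1$). Part (i) then gives the equalities in $A_{8k}^{\langle 4k-1\rangle}$. Part (iii) is then a purely algebraic consequence: for $k\neq 1,2$ the statement is immediate from \cref{theorem:Wall}, and for $k=1,2$ the relations from (ii) express $Q$ as an integral combination of $P$ and $\bfH P^2$ (resp.\ $P$ and $\bfO P^2$), so these pairs generate a subgroup containing both generators from \cref{theorem:Wall}, hence the whole group.

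The main obstacle I anticipate is not conceptual but bookkeeping: one has to be careful that $p_k^2$ descends to a well-defined homomorphism on $A_{8k}^{\langle 4k-1\rangle}$ (i.e.\ that it is additive under boundary connected sum and unchanged by the allowed bordisms), and the exact normalization conventions for $\lambda_k$, $a_k$, and the Pontryagin numbers of $\bfH P^2$ and $\bfO P^2$ must be tracked consistently with \eqref{equation:pontryaginclass}. Once these computations are made, the three parts fall out cleanly from the injectivity of $\Phi$ together with \cref{theorem:Wall}.
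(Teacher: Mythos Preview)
Your proposal is correct and follows essentially the same approach as the paper: both arguments use that $A_{8k}^{\langle 4k-1\rangle}$ is free of rank~$2$ by \cref{theorem:Wall}, show that $(\sigma,p_k^2)$ is injective by evaluating on Wall's generators, and then verify part~(ii) by comparing these invariants on both sides (the paper phrases this as $p_k^2(\bfH P^2)=p_k^2(\bfO P^2)=a_k^2(2k-1)!^2=\tfrac{1}{8}p_k^2(Q)$, which matches your computations). Your additional remarks about the well-definedness of $p_k^2$ on $A_{8k}^{\langle 4k-1\rangle}$ are a useful sanity check but not a departure from the paper's strategy.
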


\begin{proof}As opposed to the plumbing $Q$, the manifolds $\bfH P^2$, $\bfO P^2$, and $P$ have nontrivial signature. Since we computed the Pontryagin number $p_{k}^2(Q)$ to be nontrivial in \eqref{equation:pontryaginclass}, claim (i) follows from \cref{theorem:Wall}, as $A_{8k}^{\langle 4k-1\rangle}$ is free abelian of rank $2$. The Pontryagin numbers $p_k^2(\bfH P^2)$ and $p_k^2(\bfO P^2)$ can be computed as $a_{k}^2(2k-1)!^2$ for $k=1,2$, respectively, which agrees with $1/8\cdot p_k^2(Q)$ by \eqref{equation:pontryaginclass}. Claim (ii) follows from (i), remembering that $\sigma(P)=8$ and $\sigma(Q)=0$, and, using \cref{theorem:Wall}, (iii) is a consequence of (ii).
\end{proof}

\subsection{Homotopy $(4m-1)$-spheres}
Recall from \cite{KervaireMilnor} that the group $\Theta_{4m-1}$ of $h$-cobordism classes of oriented homotopy spheres fits into an exact sequence \begin{equation}\label{equation:KervaireMilnorSES}0\ra\bP_{4m}\ra\Theta_{4m-1}\ra\coker(J)_{4m-1}\ra 0\end{equation} involving the subgroup $\bP_{4m}\subseteq \Theta_{4m-1}$ of homotopy spheres bounding parallelizable manifolds and the cokernel of the stable $J$-homomorphism in degree $(4m-1)$. The subgroup $\bP_{4m}$ is generated by the \emph{Milnor sphere} $\Sigma_P=\partial P$. It is of order $\sigma_m/8$ with
\[\sigma_m=a_m2^{2m+1}(2^{2m-1}-1)\num\left(\frac{\lvert B_{2m}\rvert }{4m}\right)\] as defined in the introduction (see e.g.\,\cite[Cor.\,3.20, Lem.\,3.5 (2)]{Levine}). Brumfiel \cite{Brumfiel} has shown that every homotopy sphere $\Sigma\in\Theta_{4m-1}$ bounds a spin manifold $W_\Sigma$ with vanishing decomposable Pontryagin numbers and that the signature $\sigma(W_\Sigma)$ of such a manifold is divisible by $8$. This enabled him to establish a decomposition
 \begin{equation}\label{equation:decomposition}\Theta_{4m-1}\cong\bP_{4m}\oplus\coker(J)_{4m-1}\end{equation} via a splitting $s_B\colon\Theta_{4m-1}\ra\bP_{4m}$ of the exact sequence \eqref{equation:KervaireMilnorSES}, defined by mapping a homotopy sphere $\Sigma$ to $(\sigma(W_\Sigma)/8)\cdot \Sigma_P$. Refining Brumfiel's definition, Stolz \cite{Stolz} gave a formula for $s_B(\Sigma)$ in terms of invariants of any spin manifold that bounds $\Sigma$, without assumptions on its characteristic numbers. To state his result, we fix integers $c_m$ and $d_m$ with 
\[c_m\num\left(\frac{\lvert B_{2m}\rvert }{4m}\right)+d_m\denom\left(\frac{\lvert B_{2m}\rvert }{4m}\right)=1,\] and define a rational polynomial $\cS_m\in\oH^{4m}(\BSO;\bfQ)$ in Pontryagin classes as
\[\cS_m=\cL_m+\frac{\sigma_m}{a_m}\left(c_m\hat{\cA}_m+(-1)^md_m(\hat{\cA}\ph)_m\right),\] involving the $\cL$- and $\hat{\cA}$-class, as well as product of the $\hat{\cA}$-class with the reduced Pontryagin character $\ph$. Here \emph{reduced} refers to the triviality of $\ph$ in degree $0$. The polynomial $\cS_m$ has no contributions from the $m$th Pontryagin class (see \cite[p.\,2]{Stolz}), so its evaluation on an oriented almost closed manifold $M$ can be considered as a relative class $\cS_m(M)\in\oH^{4m}(M,\partial M;\bfQ)$. 
\begin{thm}[Stolz]\label{theorem:Stolz}For an almost closed spin manifold $M$ of dimension $4m$, the invariant
\[s(M)=\frac{1}{8}\Big(\sigma(M)-\langle\cS_m(M),[M,\partial M]\rangle\Big)\] is integral and computes the value of Brumfiel's splitting on the boundary of $M$, i.e.,
\[s_B(\partial M)=s(M)\cdot \Sigma_P.\]
\end{thm}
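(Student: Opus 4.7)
The plan is to extend Brumfiel's definition of $s_B$---which requires a bounding spin manifold with vanishing decomposable Pontryagin numbers---to an invariant computable on any spin bounding of $\partial M$, and then to reduce the general case to Brumfiel's. I would organize the argument around two pillars: first, that $s(M)\in\bfZ$ and that its residue class modulo $\sigma_m/8$ depends only on $\partial M$; second, that on a Brumfiel manifold $W_\Sigma$ the invariant $s$ recovers the scalar $\sigma(W_\Sigma)/8$ appearing in Brumfiel's definition. Since $\bP_{4m}$ is cyclic of order $\sigma_m/8$ generated by $\Sigma_P$, these two facts together yield $s_B(\partial M)=s(M)\cdot\Sigma_P$.

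For the first pillar, I take two almost closed spin manifolds $M_1,M_2$ with the same boundary and form the closed spin manifold $N=M_1\cup(-M_2)$. By Novikov additivity of the signature and additivity of Pontryagin numbers, $s(M_1)-s(M_2)=\tfrac{1}{8}(\sigma(N)-\langle\cS_m(N),[N]\rangle)$; Hirzebruch's signature theorem substitutes $\sigma(N)=\langle\cL_m(N),[N]\rangle$, and the explicit form of $\cS_m$ then yields
\[s(M_1)-s(M_2)=-\frac{\sigma_m}{8 a_m}\Big(c_m\hat{A}(N)+(-1)^m d_m\langle(\hat{\cA}\ph)_m(TN),[N]\rangle\Big).\]
By the Atiyah--Singer index theorem, $\hat{A}(N)$ is an integer, and $\langle(\hat{\cA}\ph)_m(TN),[N]\rangle$ coincides, up to an integer multiple of $\hat{A}(N)$, with the index of the Dirac operator twisted by the complexified tangent bundle and is thus also an integer; for $m$ odd both indices are further even, since the corresponding Dirac operators carry quaternionic structures in dimension $4m\equiv 4\Mod{8}$. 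Combined with the factor $\sigma_m/a_m$, this yields the integrality of $s(M)$ and the divisibility of $s(M_1)-s(M_2)$ by $\sigma_m/8$.

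For the second pillar, the key is the stated fact that $\cS_m\in\bfQ[p_1,\ldots,p_{m-1}]$ has no $p_m$-contribution. Consequently $\langle\cS_m(W_\Sigma),[W_\Sigma,\partial W_\Sigma]\rangle$ is a rational combination of decomposable Pontryagin numbers of $W_\Sigma$, all of which vanish by the defining property of a Brumfiel manifold, so $s(W_\Sigma)=\sigma(W_\Sigma)/8$ matches Brumfiel's formula. For arbitrary almost closed spin $M$ we then conclude $s(M)\equiv s(W_{\partial M})=\sigma(W_{\partial M})/8\Mod{\sigma_m/8}$, which is the asserted identity $s_B(\partial M)=s(M)\cdot\Sigma_P$. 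I expect the main obstacle to be the divisibility by $\sigma_m$ in the first pillar: while each ingredient---Atiyah--Singer integrality and the quaternionic-parity refinement---is classical, verifying that the specific Bezout combination of $c_m,d_m$ with $\num(|B_{2m}|/4m)$ and $\denom(|B_{2m}|/4m)$ clears all denominators in $\cS_m$ and yields divisibility by the \emph{full} $\sigma_m$ rather than a proper divisor requires a careful case analysis depending on the parity of $m$.
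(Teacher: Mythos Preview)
The paper does not supply a proof of this theorem: it is stated as a result of Stolz with a citation to \cite{Stolz} and used as a black box in the remainder of \cref{section:highly_connected_bordism}. There is therefore nothing in the paper to compare your argument against directly.

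That said, your outline is the standard route and is correct. One comment: the ``main obstacle'' you flag at the end is not really an obstacle. Once you have written
\[
s(M_1)-s(M_2)=-\frac{\sigma_m}{8a_m}\Big(c_m\,\hat A(N)+(-1)^m d_m\,\langle(\hat{\cA}\ph)_m(TN),[N]\rangle\Big),
\]
the bracket is an integer combination of two Dirac indices on the closed spin manifold $N$, hence an integer outright; the B\'ezout coefficients $c_m,d_m$ play no role in clearing denominators---their purpose is solely to arrange that $\cS_m$ has no $p_m$-contribution, which is the property you invoke in the second pillar. The only genuinely nontrivial divisibility is the factor $1/a_m$, and your quaternionic-parity argument in dimension $4m\equiv 4\Mod 8$ (both the untwisted Dirac index and the index twisted by the real bundle $TN\otimes\bfC$ are even) handles exactly that. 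Integrality of $s(M)$ for a single $M$ then follows, as you say, by comparison with a Brumfiel coboundary $W_{\partial M}$, on which $s=\sigma/8\in\bfZ$ by Brumfiel's theorem; the difference is a multiple of $\sigma_m/8\in\bfZ$.
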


\subsection{Closing almost closed manifolds}\label{section:closingalmostclosedmanifolds}
By the exactness of the sequence \eqref{equation:WallSES}, the bordism group $\Omega_{4m}^{\langle 2m-1\rangle}/\Theta_{4m}$ is naturally isomorphic to the kernel of the morphism \[\partial\colon A_{4m}^{\langle 2m-1\rangle}\ra\Theta_{4m-1},\] which leads us to identify these two groups henceforth. Since $A_{4m}^{\langle 2m-1\rangle}$ is generated by the classes of $P$ and $Q$ for $m\neq2,4$ by \cref{theorem:Wall}, we need to examine their boundaries $\Sigma_P$ and $\Sigma_Q$ in $\Theta_{4m-1}$ in order to determine the kernel in question. As mentioned earlier, the Milnor sphere $\Sigma_P$ is well understood and generates the subgroup $\bP_{4m}$. Regarding $\Sigma_Q$, we use Stolz's invariant to compute its image under the projection onto $\bP_{4m}$ with respect to the decomposition $\eqref{equation:decomposition}$ (see \cref{lemma:s(Q)}). Concerning its image $[\Sigma_Q]$ in $\coker(J)_{4m-1}$, there are partial results by Anderson and Schultz. To state the ones relevant for us, denote by \[j_n=\denom\left(\frac{\lvert B_{2n}\rvert }{4n}\right)\] the size of the image of the stable $J$-homomorphism in degree $4n-1$ (see \cite{Adams,Quillen}).

\begin{thm}[Anderson, Schultz]\label{theorem:AndersonSchultz}The class $[\Sigma_Q]$ in $\coker(J)_{4m-1}$ satisfies
\begin{enumerate}
\item $j_{m/2}^2\cdot[\Sigma_Q]=0$ for $m\neq 2,4$ even, and
\item $[\Sigma_Q]=0$ for $m$ odd.
\end{enumerate}
\end{thm}
\begin{proof} Claim (i) follows from the beginning of the proof of \cite[Lem.\,1.5]{Anderson}. To prove claim (ii), observe that $\Sigma_Q$ is trivial if $m\equiv 3\Mod{4}$ by \cref{theorem:Wall}. A result by Schultz \cite[Cor.\,3.2, Thm 3.4 iii)]{Schultz} settles the remaining case.
\end{proof}

Despite Anderson's bound on its order, very little is known about the class $[\Sigma_Q]\in\coker(J)_{4m-1}$ for $m$ even. In the first two cases $m=2,4$, it is known to be trivial (cf.\,\cite[Ch.\,6]{GRWabelian}) and Galatius--Randal-Williams \cite[Conj.\,A]{GRWabelian} conjectured that this is always the case. A weaker version of this conjecture appeared independently in work of Bowden--Crowley--Stipsicz \cite[Conj.\,5.9]{BowdenCrowleyStipsicz}.

\begin{conjecture}[Galatius--Randal-Williams]\label{conjecture:annoyingsphere} For $m$ even, $[\Sigma_Q]$ is trivial in $\coker(J)_{4m-1}$.
\end{conjecture}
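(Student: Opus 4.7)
The plan is to reduce the statement to a concrete question about the homotopy sphere $\partial Q$ and then to attempt to exhibit it as the boundary of an explicit parallelizable manifold. By the exact sequence \eqref{equation:KervaireMilnorSES} and Brumfiel's splitting \eqref{equation:decomposition}, the vanishing of $[\Sigma_Q]$ in $\coker(J)_{4m-1}$ is equivalent to the assertion that $\Sigma_Q$ bounds a stably parallelizable $4m$-manifold; the plumbing $Q$ itself does not qualify, since by construction its stable tangent bundle has nontrivial image in $\pi_{2m}\BSO$.

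The first computational step is to apply Stolz's \cref{theorem:Stolz} to $Q$. Since $\sigma(Q)=0$ and the only nonzero Pontryagin number of $Q$ is $p_{m/2}^2(Q)$ recorded in \eqref{equation:pontryaginclass}, a direct calculation yields $s_B(\Sigma_Q) = s(Q)\cdot \Sigma_P$ as an explicit integer multiple of the Milnor sphere. This pins down the $\bP_{4m}$-component of $\Sigma_Q$ in the splitting \eqref{equation:decomposition} and reformulates the conjecture as the claim that $\Sigma_Q - s(Q)\cdot\Sigma_P$ is trivial in $\Theta_{4m-1}$. I would then try to construct a parallelizable nullbordism of this difference, for instance by forming the boundary connected sum of $Q$ with $-s(Q)$ copies of $P$ and attempting to kill the residual obstruction to stable parallelizability by $(2m-1)$-connected surgery, exploiting the $\bfZ/2$-symmetry that exchanges the two plumbed disk bundles to produce compatible surgery data.

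The main obstacle, and the reason this remains an open conjecture, is that no known mechanism forces the last step to succeed: Anderson's result \cref{theorem:AndersonStolz} merely bounds the order of $[\Sigma_Q]$ by $j_{m/2}^2$, and both Galatius--Randal-Williams and Bowden--Crowley--Stipsicz have formulated the triviality as a conjecture precisely because the existing geometric toolkit cannot pin down $[\Sigma_Q]$ within $\coker(J)_{4m-1}$. A more conceptual route would be to analyze the image of $[\Sigma_Q]$ under detection invariants such as the Adams $e$-invariant or the $\KO$-theoretic $\alpha$-invariant and rule out each possible nontrivial value, but this ultimately reduces the problem to stable-homotopy questions about $\coker(J)_{4m-1}$ whose answers are only partially accessible through Adams spectral sequence or $\tmf$ calculations. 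Any complete proof would therefore likely require either a genuinely new geometric construction tailored to the specific structure of $Q$, or substantial new input on the $(4m-1)$-st stable stem.
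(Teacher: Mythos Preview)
The statement you were asked to prove is not a theorem but an explicitly open conjecture: the paper records it as Conjecture~\ref{conjecture:annoyingsphere} and immediately remarks (\cref{remark:conjectureknown}) that it is known only for $m=2,4$ and already open for $m=6$. There is accordingly no proof in the paper to compare against.

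Your proposal correctly recognises this. What you have written is not a proof but an outline of how one might try to attack the problem together with an honest account of why those attempts fall short. The reductions you sketch are accurate and in fact track the paper's own treatment: the computation of Stolz's invariant $s(Q)$ you describe is carried out in \cref{lemma:s(Q)}, and the reformulation of the conjecture as the triviality of $\Sigma_Q - s(Q)\cdot\Sigma_P$ in $\Theta_{4m-1}$ is precisely what underlies \cref{corollary:s(Q)consequences} and \cref{proposition:kernel}. Your assessment that the obstruction lives in $\coker(J)_{4m-1}$ and that neither geometric surgery arguments nor available detection invariants currently suffice is consistent with the state of the literature as the paper presents it. One minor point: the $e$-invariant vanishes on $\coker(J)$ essentially by definition, so it cannot help detect $[\Sigma_Q]$; the relevant invariants would rather be higher Adams filtration phenomena or $\tmf$-based detection, as you also mention.
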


\begin{rem}After the completion of this work, Burklund, Hahn, and Senger \cite{BHS} established \cref{conjecture:annoyingsphere} for all even $m>62$.
\end{rem}

To state our formula for the image of $\Sigma_Q$ under the projection onto $\bP_{4m}$, we denote by 
\[T_n=2^{2n}(2^{2n}-1)\frac{\lvert B_{2n}\rvert }{2n}\] the $n$th \emph{tangent number}, which is known to be an integer (see e.g.\,\cite[Rem.\,1.18]{BernoulliZeta}).

\begin{lem}\label{lemma:s(Q)}Stolz' invariant $s(Q)$ of $Q$ vanishes if $m$ is odd. For $m=2k$ even, it satisfies
\[s(Q)=-\frac{\lambda_k^2}{8j_k^2} \left( \sigma_k^2+a_k^2\sigma_{2k}\num\left(\frac{\lvert B_{2k}\rvert }{4k}\right)\left(c_{2k}\num\left(\frac{\lvert B_{2k}\rvert }{4k}\right)+2(-1)^kd_{2k}j_k\right)\right),\] 
as well as
\[s(Q)=\frac{\lambda_k^2a_k^2}{4}\left(\sigma_{2k}d_{2k}\frac{\lvert B_{2k}\rvert }{4k}\left(\frac{\lvert B_{2k}\rvert }{\lvert B_{4k}\rvert }+(-1)^{k+1}\right)-\frac{T_k^2}{4}\right),\] where $\lambda_k=1$ if $k\neq 1,2$ and $\lambda_k=2$ otherwise.\end{lem}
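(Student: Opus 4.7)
The strategy is to evaluate Stolz's formula
\[
s(Q)\;=\;\tfrac{1}{8}\bigl(\sigma(Q) - \langle\cS_m(Q),[Q,\partial Q]\rangle\bigr)
\]
directly, using that $\sigma(Q)=0$, that $\cS_m$ has no $p_m$ term (so every monomial in $\cS_m$ is a product of Pontryagin classes $p_i$ with $1\le i\le m-1$), and the description of the Pontryagin classes of $Q$ recalled just above the lemma. The crucial cohomological input is that $\oH^{4i}(Q)=0$ for every $0<i<m$ unless $m=2k$ is even and $i=k$; accordingly, $p_i(Q)=0$ whenever $0<i<m$ and $4i\neq 2m$, while in the even case $p_k(Q)=\lambda_k a_k(2k-1)!(\alpha_1+\alpha_2)$.

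For $m$ odd, every monomial in $\cS_m$ contains at least one such vanishing factor, so $\cS_m(Q)=0$ and hence $s(Q)=0$.

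For $m=2k$ even, the same argument leaves $p_k^2$ as the unique monomial in $\cS_{2k}$ contributing non-trivially. Denoting by $\kappa_k\in\bfQ$ its coefficient in $\cS_{2k}$, formula \eqref{equation:pontryaginclass} yields
\[
s(Q)\;=\;-\tfrac{1}{8}\cdot 2\kappa_k\lambda_k^2 a_k^2(2k-1)!^2\;=\;-\tfrac{\kappa_k\lambda_k^2 a_k^2(2k-1)!^2}{4},
\]
and the definition of $\cS_{2k}$ gives $\kappa_k = A_k+\sigma_{2k}(c_{2k}B_k+d_{2k}C_k)$, where $A_k$, $B_k$, $C_k$ are the respective $p_k^2$-coefficients of $\cL_{2k}$, $\hat{\cA}_{2k}$, and $(\hat{\cA}\ph)_{2k}$.

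It remains to compute $A_k$, $B_k$, $C_k$ and simplify. The first two are extracted from the descriptions of the $\cL$- and $\hat{A}$-classes as multiplicative sequences with defining series $x/\tanh x$ and $(x/2)/\sinh(x/2)$, using Newton's identities to convert between power sums and elementary symmetric functions in the formal Pontryagin roots. For $C_k$, one expands $(\hat{\cA}\ph)_{2k}=\sum_{i+j=2k}\hat{\cA}_i\ph_j$ (noting $\ph_0=0$) and observes by a degree count that the $p_k^2$-coefficient receives contributions only from $\hat{\cA}_0\cdot\ph_{2k}$ (namely the $p_k^2$-coefficient of $\ph_{2k}=s_{2k}/(4k)!$) and from $\hat{\cA}_k\cdot\ph_k$ (the product of the linear $p_k$-coefficients of the two factors). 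Each of $A_k$, $B_k$, $C_k$ thus becomes an explicit rational expression in the Bernoulli numbers $B_{2k}$, $B_{4k}$, and factorials. Substituting into the displayed formula for $s(Q)$ and using the definitions $\sigma_m = a_m 2^{2m+1}(2^{2m-1}-1)\num(|B_{2m}|/(4m))$ and $j_m=\denom(|B_{2m}|/(4m))$, together with the Bézout identity for $c_{2k}, d_{2k}$, yields the first displayed formula in the lemma after rearrangement. The equivalence with the second formula involving the tangent number $T_k$ follows from the identity $T_k = 2^{2k+1}(2^{2k}-1)\num(|B_{2k}|/(4k))/j_k$ and further manipulation. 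The main obstacle is precisely this coefficient computation and the subsequent algebraic simplification, which, while mechanical, involves careful bookkeeping of Bernoulli-number and factorial identities.
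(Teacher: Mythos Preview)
Your approach is correct and essentially identical to the paper's: both reduce to evaluating the $p_k^2$-coefficient of $\cS_{2k}$ on $Q$ and assemble it from the coefficients of $\cL_{2k}$, $\hat{\cA}_{2k}$, and $(\hat{\cA}\ph)_{2k}$, then simplify via the B\'ezout relation for $c_{2k},d_{2k}$. The only difference is that the paper quotes these $p_k^2$-coefficients directly (they are recorded just before the proof as $\tfrac12(s_k^2-s_{2k})$, $\tfrac12(\hat s_k^2-\hat s_{2k})$, and $\tfrac{(-1)^{k+1}\hat s_k}{(2k-1)!}+\tfrac{1}{2(4k-1)!}$, with explicit values for $s_n,\hat s_n$), so your proposed rederivation via Newton's identities is unnecessary work.
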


We postpone the proof of this lemma to the next subsection and continue by elaborating on some of its consequences instead.

\begin{cor}\label{corollary:s(Q)consequences}For $m$ even, the homotopy spheres $\Sigma_P$ and $\Sigma_Q$ in $\Theta_{4m-1}$ satisfy
\begin{enumerate}
\item $j_{m/2}^2\cdot\Sigma_Q=(\sigma_{m/2}^2/8)\cdot \Sigma_P$ for $m\neq2,4$, and
\item $\Sigma_Q=-\Sigma_P$ for $m=2,4$.
\end{enumerate}
\end{cor}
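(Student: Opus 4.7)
The plan is to apply Brumfiel's splitting \eqref{equation:decomposition} to the homotopy sphere $\Sigma_Q$. Under the decomposition $\Theta_{4m-1}\cong \bP_{4m}\oplus\coker(J)_{4m-1}$, the class $\Sigma_Q$ splits as $s_B(\Sigma_Q) + [\Sigma_Q]$, and by \cref{theorem:Stolz} applied to $M=Q$ we have $s_B(\Sigma_Q)=s(Q)\cdot\Sigma_P$. Part (i) is then a matter of controlling each summand modulo the order of $\Sigma_P$, while part (ii) will follow from a much cleaner direct argument using the exact sequence \eqref{equation:WallSES}.

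For part (ii), I would simply observe that the boundary homomorphism $\partial\colon A_{4m}^{\langle 2m-1\rangle}\to\Theta_{4m-1}$ annihilates classes represented by closed manifolds. \cref{lemma:alternativebasis}(ii) identifies $P+Q$ with $8\cdot \mathbf{H}P^2$ when $m=2$ and with $8\cdot\mathbf{O}P^2$ when $m=4$ inside $A_{4m}^{\langle 2m-1\rangle}$. Taking boundaries on both sides gives $\Sigma_P+\Sigma_Q=0$, and hence $\Sigma_Q=-\Sigma_P$.

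For part (i), set $k=m/2$ with $k\neq 1,2$, so that $\lambda_k=1$. \cref{theorem:AndersonStolz}(i) asserts that $j_k^2\cdot[\Sigma_Q]=0$ in $\coker(J)_{4m-1}$, so multiplying by $j_k^2$ in $\Theta_{4m-1}$ kills the second summand and yields
\[
j_k^2\cdot\Sigma_Q \;=\; \bigl(j_k^2\cdot s(Q)\bigr)\cdot\Sigma_P.
\]
Now I would substitute the first formula of \cref{lemma:s(Q)}, which—after clearing $j_k^2$—expresses $j_k^2\cdot s(Q)$ as the integer
$-\tfrac{1}{8}\bigl(\sigma_k^2 + a_k^2\sigma_{2k}\num(\tfrac{|B_{2k}|}{4k})\bigl(c_{2k}\num(\tfrac{|B_{2k}|}{4k})+2(-1)^k d_{2k}j_k\bigr)\bigr)$. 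The crucial observation is that $\sigma_{2k}=\sigma_m$ and that $\Sigma_P$ has order $\sigma_m/8$ in $\bP_{4m}$; consequently the second summand (which is an integer multiple of $\sigma_m/8$) acts trivially on $\Sigma_P$, leaving only the contribution $-\tfrac{\sigma_k^2}{8}\cdot\Sigma_P$ in $\bP_{4m}$.

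The main obstacle is bookkeeping in part (i): verifying that $a_k^2\sigma_{2k}\num(\tfrac{|B_{2k}|}{4k})(c_{2k}\num(\tfrac{|B_{2k}|}{4k})+2(-1)^k d_{2k}j_k)/8$ is indeed an integer multiple of $\sigma_m/8$ (so that it vanishes on $\Sigma_P$), and reconciling the sign of $-\sigma_{m/2}^2/8$ with the stated $\sigma_{m/2}^2/8$ inside the cyclic group $\bP_{4m}$—a sign that may be absorbed into the choice of generator or by the explicit $2$-adic structure of $\sigma_m$ versus $\sigma_{m/2}^2$. Beyond this arithmetic, both parts are straightforward consequences of the structural results already assembled in the section.
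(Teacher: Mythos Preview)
Your argument for part~(i) is exactly the paper's: kill the $\coker(J)$-component via \cref{theorem:AndersonStolz}, then read off the $\bP_{4m}$-component from the first formula of \cref{lemma:s(Q)} modulo the order $\sigma_m/8$ of $\Sigma_P$. The sign issue you flag, however, is real and cannot be absorbed as you suggest: the formula genuinely gives $j_k^2\, s(Q)\equiv -\sigma_k^2/8\pmod{\sigma_{2k}/8}$, and since $\sigma_{2k}\nmid 2\sigma_k^2$ in general (already for $k=3$ one has $2\sigma_3^2<\sigma_6$), the elements $\pm(\sigma_k^2/8)\cdot\Sigma_P$ are distinct in $\bP_{8k}$. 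The paper's own proof glosses over this, writing ``congruent to $\sigma_k^2/8$''; the honest output of the computation is $j_{m/2}^2\cdot\Sigma_Q=-(\sigma_{m/2}^2/8)\cdot\Sigma_P$. This appears to be a harmless sign slip in the stated corollary (it is only quoted in a subsequent remark), so you should not try to argue the sign away---just record the minus.

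Your argument for part~(ii) is genuinely different from the paper's and cleaner. The paper evaluates the second formula of \cref{lemma:s(Q)} explicitly, using $\num(|B_4|/8)=\num(|B_8|/16)=1$ (whence one may take $d_2=d_4=0$) together with $T_1=1$ and $T_2=2$ to obtain $s(Q)=-1$, and then invokes the known cases $m=2,4$ of \cref{conjecture:annoyingsphere} (see \cref{remark:conjectureknown}) to conclude $\Sigma_Q=s(Q)\cdot\Sigma_P=-\Sigma_P$. Your route via $\partial(P+Q)=8\,\partial(\bfH P^2)=0$ (and likewise with $\bfO P^2$) uses only the exactness of \eqref{equation:WallSES} and \cref{lemma:alternativebasis}(ii), sidestepping both the Bernoulli arithmetic and the appeal to the conjecture.
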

\begin{proof}We write $m=2k$. By \cref{theorem:AndersonSchultz}, the homotopy sphere $j_{k}^2\cdot\Sigma_Q$ lies in $\bP_{8k}$ for $2k\neq2,4$, so \cref{theorem:Stolz} gives the relation $j_{k}^2\cdot\Sigma_Q=j_{k}^2s(Q)\cdot\Sigma_P$. From the first formula of \cref{lemma:s(Q)}, we see that $j_{k}^2s(Q)$ is congruent to $\sigma_{k}^2/8$ modulo $\sigma_{2k}$. This has item (i) as a consequence. Since $\num(\lvert B_4\rvert/8)=\num(\lvert B_8\rvert/16)=1$, we conclude $d_2=d_4=0$. Thus, the second formula of the lemma gives $s(Q)=-T_1^2$ for $k=1$ and $s(Q)=-T_2^2/4$ for $k=2$. As $T_1=1$ and $T_2=2$, we have $s(Q)=-1$ for $k=1,2$, which, together with \cref{theorem:Stolz} and the fact that $[\Sigma_Q]\in\coker(J)_{8k-1}$ vanishes for $k=1,2$, implies item (ii).
\end{proof}

Combining Wall's classification, Stolz' invariant, and \cref{theorem:AndersonSchultz}, we determine the kernel of the boundary map in the sequence \eqref{equation:WallSES}, and hence the bordism group $\Omega_{4m}^{\langle 2m-1\rangle}/\Theta_{4m}$, as follows. The order of the class $[\Sigma_Q]$ in $\coker(J)_{4m-1}$ is denoted by $\ord([\Sigma_Q])$.

\begin{thm}\label{proposition:kernel}The bordism group $\Omega_{4m}^{\langle 2m-1\rangle}/\Theta_{4m}$ satisfies
\[\Omega_{4m}^{\langle 2m-1\rangle}/\Theta_{4m}\cong \begin{cases}
\bfZ\oplus \bfZ/2&\mbox{if }m\equiv 1\Mod{4}\\
\bfZ&\mbox{if }m\equiv 3\Mod{4}\\
\bfZ\oplus \bfZ&\mbox{if }m\equiv 0\Mod{2}.
\end{cases}\] The first summand is generated by $(\sigma_m/8)\cdot P$ in all cases and the $\bfZ/2$-summand for $m\equiv 1\Mod{4}$ is generated by $Q$.
For $m$ even, the second summand is generated by $\bfH P^2$ if $m=2$, by $\bfO P^2$ if $m=4$, and by $\ord([\Sigma_Q])\big(Q-s(Q)\cdot P\big)$ otherwise.
\end{thm}

\begin{proof}
By \cref{theorem:Wall} and \cref{lemma:alternativebasis}, the group $A_{4m}^{\langle 2m-1\rangle}$ is isomorphic to a direct sum $\bfZ\oplus C$ for a cyclic group $C$, where the first summand is generated by $P$, and the second summand by $Q$ for $m\neq2,4$, by $\bfH P^2$ for $m=2$, and by $\bfO P^2$ for $m=4$. Recall that the Milnor sphere $\Sigma_P$ generates the cyclic group $\bP_{4m}$, which is of order $\sigma_{m}/8$. By exactness of \eqref{equation:KervaireMilnorSES}, the homotopy sphere $\ord([\Sigma_Q])\cdot\Sigma_Q$ is contained in $\bP_{4m}$, so it coincides with $\ord([\Sigma_Q])s(Q)\cdot\Sigma_P$ by \cref{theorem:Stolz}. Using this, it follows from elementary algebraic considerations that the classes $(\sigma_m/8)\cdot P$ and $\ord([\Sigma_Q])\big(Q-s(Q)\cdot P\big)$ generate the kernel for $m\neq2,4$. As $Q$ has infinite order for $m$ even, this settles the case for $m\neq 2,4$ even. The class of $Q$ has order $2$ for $m\equiv 1\Mod{4}$ and is trivial for $m\equiv 3\Mod{4}$ by \cref{theorem:Wall}. Together with the fact that, for $m$ odd, we have $s(Q)=0$ by \cref{lemma:s(Q)} and $\ord([\Sigma_Q])=1$ by \cref{theorem:AndersonSchultz}, this concludes the proof for $m$ odd. The cases $m=2,4$ are immediate.
\end{proof}

\begin{rem}Note that, for $m=2k\neq2,4$, \cref{corollary:s(Q)consequences} implies that $(\sigma_{k}^2/8)\cdot P-j_{k}^2\cdot Q$ is contained in the kernel under consideration.
\end{rem}

\begin{rem}\label{remark:usefulbasis}After replacing $\bfH P^2$ by $4\cdot \bfH P^2$ and $\bfO P^2$ by $4\cdot \bfO P^2$, the statement of \cref{proposition:kernel} remains valid if $\Omega^{\langle 2m-1\rangle}_{4m}/\Theta_{4m}$ is replaced by the subgroup $\Omega^{\langle 2m-1\rangle}_{4m}/\Theta_{4m} \cap \sigma^{-1} (4\cdot \bfZ)$. This is because the signatures of $P$ and $Q$ are divisible by $8$, whereas $\sigma(\bfH P^2)=\sigma(\bfO P^2)=1$.
\end{rem}

\subsection{Characteristic numbers of highly connected manifolds}\label{section:charnumberformulas}
This subsection serves to prove \cref{lemma:s(Q)} and to use it in combination with \cref{proposition:kernel} to compute the lattices of characteristic numbers realized by closed $(2m-1)$-connected $4m$-manifolds. Note that for such manifolds, all Pontryagin classes vanish, except possibly $p_{m}$, and $p_{m/2}$ if $m$ is even. As a result of this, the $\cL$-class, the $\hat{\cA}$-class, the reduced Pontryagin character $\ph$, and the product $(\hat{\cA}\ph)$ of the latter two, have the form (cf.\,\cite[Ch.\,1,3]{Hirzebruch})
\[\cL_m=\begin{cases}s_mp_m&\mbox{if }m\text{ is odd}\\
\frac{1}{2}(s_{k}^2-s_{2k})p_{k}^2+s_{2k}p_{2k}&\mbox{if }m=2k\text{ is even},
\end{cases}\]
\[\hat{\cA}_m=\begin{cases}\hat{s}_mp_m&\mbox{if }m\text{ is odd}\\
\frac{1}{2}(\hat{s}_{k}^2-\hat{s}_{2k})p_{k}^2+\hat{s}_{2k}p_{2k}&\mbox{if }m=2k\text{ is even},
\end{cases}\]
\[\ph_m=\begin{cases}\frac{(-1)^{m+1}}{(2m-1)!}p_m&\mbox{if }m\text{ is odd}\\
\frac{1}{2(4k-1)!}p_k^2-\frac{1}{(4k-1)!}p_{2k}&\mbox{if }m=2k\text{ is even},
\end{cases}\text{ and}\]
\[(\hat{\cA}\ph)_m=\begin{cases}\frac{(-1)^{m+1}}{(2m-1)!}p_m&\mbox{if }m\text{ is odd}\\
\frac{(-1)^{k+1}\hat{s}_k}{(2k-1)!}p_k^2+\frac{1}{2(4k-1)!}p_k^2-\frac{1}{(4k-1)!}p_{2k}&\mbox{if }m=2k\text{ is even},
\end{cases}\]
where $\hat{s}_n$ and $s_n$ are given by
\begin{equation}\label{equation:classcoefficients}\hat{s}_n=\frac{-1}{(2n-1)!}\frac{\lvert B_{2n}\rvert }{4n}\quad\text{ and }\quad s_n=-2^{2n+1}(2^{2n-1}-1)\hat{s}_n=\frac{\sigma_n}{a_n(2n-1)!j_n}\end{equation} for $n\ge1$. Solving $\cL_{2k}$ for $p_{2k}$ and expressing $\hat{\cA}_{2k}$ in terms of $\cL_{2k}$ and $p_k^2$, we obtain
\begin{equation}\label{equation:p2k}{p}_{2k}=\frac{1}{s_{2k}}\cL_{2k}-\frac{s_k^2-s_{2k}}{2s_{2k}}p_{k}^2\quad\text{and}\quad\hat{\cA}_{2k}=\frac{s_{2k}\hat{s}_{k}^2-\hat{s}_{2k}s_k^2}{2s_{2k}}p_{k}^2+\frac{\hat{s}_{2k}}{s_{2k}}\cL_{2k}.\end{equation} Substituting the variables with their values, the last equation becomes 
\begin{equation}\label{equation:Ahat}\hat{\cA}_{2k}=\frac{T_k^2}{(2k-1)!^22^{4k+3}(2^{4k-1}-1)}p_k^2-\frac{1}{2^{4k+1}(2^{4k-1}-1)}\cL_{2k}.\end{equation}

\begin{proof}[Proof of \cref{lemma:s(Q)}]
As $Q$ is $(2m-1)$-connected, all its decomposable Pontryagin numbers vanish for $m$ odd, and hence so does the characteristic number $\langle\cS_m(Q),[Q,\partial Q]\rangle$, since $\cS_m$ does not involve $p_m$ (see \cite[p.\,2]{Stolz}). The first part of the lemma follows therefore from the triviality of the signature of $Q$. For $m=2k$, we use the formulas above to calculate
\[\cS_{2k}(Q)=\left(\frac{1}{2}(s_k^2-s_{2k})+\sigma_{2k}c_{2k}\frac{1}{2}(\hat{s}_k^2-\hat{s}_{2k})+\sigma_{2k}d_{2k}\hat{s}_k\frac{(-1)^{k+1}}{(2k-1)!}+\sigma_{2k}d_{2k}\frac{1}{2(4k-1)!}\right)p_{k}^2(Q).\] Using the second description of $s_{2k}$ in \eqref{equation:classcoefficients}, one obtains the identity
\[\frac{1}{2}s_{2k}=\sigma_{2k}d_{2k}\frac{1}{2(4k-1)!}-\sigma_{2k}c_{2k}\frac{1}{2}\hat{s}_{2k},\] which simplifies the above formula for $\cS_{2k}(Q)$ to
\[\cS_{2k}(Q)=\left(\frac{1}{2}s_k^2+\sigma_{2k}c_{2k}\frac{1}{2}\hat{s}^2_k+\sigma_{2k}d_{2k}\hat{s}_k\frac{(-1)^{k+1}}{(2k-1)!}\right)p_{k}^2(Q).\] Substituting $\hat{s}_k$ with its value and using \eqref{equation:pontryaginclass} as well as the last identity in \eqref{equation:classcoefficients}, we arrive at
\[\cS_{2k}(Q)=\left(\frac{\lambda_k^2\sigma_k^2}{j_k^2}+\lambda_k^2\sigma_{2k}\left(a_k^2c_{2k}(\frac{\lvert B_{2k}\rvert }{4k})^2+2(-1)^kd_{2k}\frac{\lvert B_{2k}\rvert }{4k}\right)\right)[Q,\partial Q]^*,\] from which we obtain the first formula of the statement, since $s(Q)=-1/8\langle \cS_{2k}(Q),[Q,\partial Q]\rangle$ as $\sigma(Q)=0$. The second formula follows from the first together with the identity
\[\sigma_{2k}c_{2k}=2^{4k+1}(2^{4k-1}-1)\left(1-\denom\left(\frac{\lvert B_{4k}\rvert }{8k}\right)d_{2k}\right)\] by combining two of the summands to $-a_k^2\lambda_k^2T_k^2/{16}$ and simplifying the expressions.
\end{proof}

To determine the combinations of Pontryagin numbers, signatures, and $\hat{A}$-genera that are realized by closed highly connected $4m$-manifolds, we recall that, even for almost closed $4m$-manifolds $M$, these invariants are additive bordism invariants, where the top-dimensional Pontryagin number $p_m(M)$ is defined such that the Hirzebruch signature formula $\sigma(M)=\langle\cL_m(M),[M]\rangle$ holds. Using this description of $p_m$, the $\hat{A}$-genus of an almost closed $4m$-manifold is defined by $\hat{A}(M)=\langle\hat{\cA}_m(M),[M]\rangle$.

For $m$ odd, \cref{proposition:kernel} shows that the torsion free quotient of $\Omega_{4m}^{\langle 2m-1\rangle}$ is generated by the class $(\sigma_m/8)\cdot P$, whose invariants can be easily computed from $\sigma(P)=8$ and $p_k^2(P)=0$ using the formulas recalled at the beginning of this subsection.

\begin{prop}\label{proposition:latticeodd}For $m\neq1$ odd, the torsion free quotient of $\Omega^{\langle 2m-1\rangle}_{4m}$ is generated by $(\sigma_m/8)\cdot P$, whose signature, $\hat{A}$-genus and Pontryagin numbers are
\[\sigma\big((\sigma_m/8)\cdot P\big)=\sigma_m,\quad\hat{A}\big((\sigma_m/8)\cdot P\big)=-2\num\left(\frac{\lvert B_{2m}\rvert }{4m}\right),\quad\text{and}\]\[p_{m}\big((\sigma_m/8)\cdot P\big)=2(2m-1)!j_m.\] 
\end{prop}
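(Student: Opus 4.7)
The plan is to separate the statement into two parts: first, identifying $(\sigma_m/8)\cdot P$ as a generator of the torsion-free quotient, and second, a purely computational evaluation of the three invariants. The first part will follow essentially for free from the preceding work in the section; the real content lies in the bookkeeping of the second part.

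For the generation claim, I would invoke \cref{proposition:kernel}. Since Kervaire--Milnor's group $\Theta_{4m}$ is finite, the quotient map $\Omega^{\langle 2m-1\rangle}_{4m}\twoheadrightarrow\Omega^{\langle 2m-1\rangle}_{4m}/\Theta_{4m}$ is an isomorphism on torsion-free quotients, so it suffices to identify the torsion-free part of the latter. For $m\not\equiv1\pmod 4$ or $m\neq 5$ the group is either $\bfZ$ or $\bfZ\oplus\bfZ/2$ with the free summand generated by $(\sigma_m/8)\cdot P$; for $m=5$ the same conclusion holds regardless of which of the two possibilities listed in \cref{proposition:kernel} occurs, since in both cases the free summand is described identically.

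For the invariants, the plan is to first compute them on $P$ and then use additivity of $\sigma$, $\hat{A}$, and $p_m$ on $\Omega^{\langle 2m-1\rangle}_{4m}$ to scale by $\sigma_m/8$. Since $P$ is $(2m-1)$-connected and $m$ is odd, the formulas recalled at the beginning of this subsection collapse to
\[\cL_m(P)=s_m\,p_m(P)\quad\text{and}\quad\hat{\cA}_m(P)=\hat{s}_m\,p_m(P).\]
Using $\sigma(P)=8$ via the Hirzebruch signature formula therefore solves $p_m(P)=8/s_m$, and the second identity gives $\hat{A}(P)=8\hat{s}_m/s_m$. Scaling by $\sigma_m/8$ and substituting the identities $s_m=\sigma_m/(a_m(2m-1)!j_m)$ and $s_m=-2^{2m+1}(2^{2m-1}-1)\hat{s}_m$ from \eqref{equation:classcoefficients}, together with $a_m=2$ (since $m$ is odd), should reproduce the displayed values $p_m((\sigma_m/8)\cdot P)=2(2m-1)!j_m$ and $\hat{A}((\sigma_m/8)\cdot P)=-2\num(|B_{2m}|/(4m))$. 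The signature computation is immediate from $\sigma(P)=8$.

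The only real obstacle is the bookkeeping in unwinding $\sigma_m$, $s_m$, $\hat{s}_m$, and $a_m$ to land on the clean right-hand sides; no new ideas are needed beyond \cref{proposition:kernel} and the standard formulas for the $\cL$- and $\hat{\cA}$-classes in this highly connected range.
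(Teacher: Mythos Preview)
Your proposal is correct and follows exactly the route the paper takes: invoke \cref{proposition:kernel} (together with finiteness of $\Theta_{4m}$) for the generation claim, and compute the invariants from $\sigma(P)=8$ using the formulas for $\cL_m$ and $\hat{\cA}_m$ recalled at the start of the subsection. The paper in fact gives less detail than you do, simply stating that the invariants ``can be easily computed from $\sigma(P)=8$ and $p_k^2(P)=0$ using the formulas recalled at the beginning of this subsection.''
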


For $m$ even, we compute the occurring characteristic numbers as follows.

\begin{prop}\label{proposition:latticeeven}The torsion free quotient of $\Omega^{\langle 4k-1\rangle}_{8k}$ agrees with $\Omega_{8k}^{\langle 4k-1\rangle}/\Theta_{8k}$, and the morphisms induced by the respective characteristic numbers
 \[\sigma,\hat{A}, p_{2k}, p_k^2\colon\Omega^{\langle 4k-1\rangle}_{8k}/\Theta_{8k}\ra\bfZ\] are, with respect to the ordered basis described in \cref{proposition:kernel}, given by
\[\sigma=\begin{pmatrix}
    \sigma_{2k}\\
    \ord([\Sigma_Q])\frac{a_k^2}{\mu_k}\left(\frac{{T_k}^2}{2}-2\sigma_{2k}d_{2k}\frac{\lvert B_{2k}\rvert }{4k}\left(\frac{\lvert B_{2k}\rvert }{\lvert B_{4k}\rvert }+(-1)^{k+1}\right)\right)
\end{pmatrix},\]
\[\hat{A}=\begin{pmatrix}
    -\num\left(\frac{\lvert B_{4k}\rvert }{8k}\right)\\
    2\ord([\Sigma_Q])\frac{a_k^2}{\mu_k}\num\left(\frac{\lvert B_{4k}\rvert }{8k}\right)d_{2k}\frac{\lvert B_{2k}\rvert }{4k}\left(\frac{\lvert B_{2k}\rvert }{\lvert B_{4k}\rvert }+(-1)^{k+1}\right)
\end{pmatrix},\]
\[{p_{2k}}=\begin{pmatrix}
    (4k-1)!j_{2k}\\
    \ord([\Sigma_Q])\frac{a_k^2}{\mu_k}\left((2k-1)!^2+(4k-1)!j_{2k}\frac{\lvert B_{2k}\rvert }{4k}\left(c_{2k}\frac{\lvert B_{2k}\rvert }{4k}+2d_{2k}(-1)^k\right)\right)\\
\end{pmatrix},\text{ and}\]
\[{p_k^2}=\begin{pmatrix}
    0\\
    2\ord([\Sigma_Q])\frac{a_k^2}{\mu_k}(2k-1)!^2
\end{pmatrix},\] where $\mu_k=2$ if $k=1,2$ and $\mu_k=1$ otherwise. Furthermore, after replacing $\mu_k$ with its reciprocal, the same formulas hold for the subgroup $\Omega^{\langle 4k-1\rangle}_{8k}/\Theta_{8k}\cap\sigma^{-1}(4\cdot\bfZ)$ using the ordered basis described in \cref{remark:usefulbasis}.
\end{prop}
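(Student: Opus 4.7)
The plan is to first identify the torsion free quotient and then compute the four characteristic numbers on each basis element separately, using the polynomial expansions recorded at the beginning of this subsection together with \cref{lemma:alternativebasis} and \cref{lemma:s(Q)}.

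Since $\Theta_{8k}$ is finite by Kervaire--Milnor, its image in $\Omega^{\langle 4k-1\rangle}_{8k}$ consists of torsion elements. Because $\Omega^{\langle 4k-1\rangle}_{8k}/\Theta_{8k}\cong\bfZ\oplus\bfZ$ by \cref{proposition:kernel}, this quotient is torsion free, so $\Theta_{8k}$ must contain, and hence equal, the full torsion subgroup of $\Omega^{\langle 4k-1\rangle}_{8k}$. This proves that the torsion free quotient agrees with $\Omega^{\langle 4k-1\rangle}_{8k}/\Theta_{8k}$.

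For the first basis element $(\sigma_{2k}/8)\cdot P$, I would exploit that $P$ is parallelizable, so $p_k^{2}(P)=0$, and that $\sigma(P)=8$. Substituting into \eqref{equation:p2k} and \eqref{equation:Ahat} and using the value of $s_{2k}$ from \eqref{equation:classcoefficients} (with $a_{2k}=1$ since $2k$ is even) immediately yields the first coordinates of the four column vectors. For the second basis element in the generic case $k\neq 1,2$, the generator is $\ord([\Sigma_Q])(Q-s(Q)\cdot P)$; here $\sigma(Q)=0$ and $p_k^{2}(Q)=2a_k^{2}(2k-1)!^{2}$ by \eqref{equation:pontryaginclass} (with $\lambda_k=1$). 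The signature entry is then $-8\,\ord([\Sigma_Q])\,s(Q)$, and substituting the second formula for $s(Q)$ from \cref{lemma:s(Q)} rewrites this as the stated expression. The entries for $p_{2k}$ and $\hat{A}$ follow by feeding $\sigma$ and $p_k^{2}$ into \eqref{equation:p2k} and \eqref{equation:Ahat}, with the first formula of \cref{lemma:s(Q)} used in the $p_{2k}$ computation to eliminate $s(Q)$ in favor of the explicit arithmetic expression displayed in the statement.

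In the remaining cases $k=1,2$, the second basis element is $\bfH P^{2}$ or $\bfO P^{2}$, both of signature $1$ and with $p_k^{2}=a_k^{2}(2k-1)!^{2}$ by the proof of \cref{lemma:alternativebasis}. Noting that $\ord([\Sigma_Q])=1$ and $d_{2k}=0$ in these cases, all four entries can then be verified by direct substitution; the factor $\mu_k=2$ in the denominators reflects that these values are half of what the generic formula applied to $Q-s(Q)\cdot P$ would produce. The final assertion, concerning the subgroup $\sigma^{-1}(4\cdot\bfZ)$, follows by passing from the basis above to that of \cref{remark:usefulbasis}, which replaces $\bfH P^{2}$ and $\bfO P^{2}$ by four times themselves; this multiplies each characteristic number by $\mu_k^{2}$, which for $\mu_k\in\{1,2\}$ is exactly the effect of replacing $\mu_k$ by $1/\mu_k$. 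The main technical obstacle will be the arithmetic bookkeeping needed to reconcile the two forms of $s(Q)$ from \cref{lemma:s(Q)} against the claimed formulas without introducing sign or factor errors among the various Bernoulli, tangent number, and $J$-image denominator terms.
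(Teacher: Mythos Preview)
Your proposal is correct and follows essentially the same approach as the paper: both compute the first basis element from $\sigma(P)=8$ and $p_k^2(P)=0$, handle the second via the two formulas for $s(Q)$ in \cref{lemma:s(Q)} fed into \eqref{equation:p2k} and \eqref{equation:Ahat}, and deduce the final assertion from \cref{remark:usefulbasis}. The only organizational difference is that the paper treats all $k$ uniformly by observing (via \cref{corollary:s(Q)consequences} and \cref{lemma:alternativebasis}) that $\ord([\Sigma_Q])(Q-s(Q)\cdot P)$ equals $8\cdot\bfH P^2$ or $8\cdot\bfO P^2$ when $k=1,2$, so that the invariants of this single class are $\mu_k\lambda_k^2$ times the claimed second entries in every case, whereas you split off $k=1,2$ and verify those directly using $d_{2k}=0$ and $\ord([\Sigma_Q])=1$.
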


\begin{proof}The first statement follows from \cref{proposition:kernel} and the computation of the invariants for $(\sigma_m/8)\cdot P$ is analogous to \cref{proposition:latticeodd}, using the fact that $p_k^2(P)$ vanishes since $P$ is parallelizable.
The second part of \cref{corollary:s(Q)consequences} and \cref{lemma:alternativebasis} imply that $\ord([\Sigma_Q])\big(Q-s(Q)\cdot P\big)$ equals $8\cdot \bfH P^2$ for $k=1$ and $8\cdot \bfO P^2$ for $k=2$. Since $\mu_k\lambda_k^2$ is $8$ if $k=1,2$ and $1$ otherwise, it is sufficient to show that the invariants of the class $\ord([\Sigma_Q])\big(Q-s(Q)\cdot P\big)$ are in all cases given by the product of $\mu_k\lambda_k^2$ with the claimed invariants of the second basis vector. For $p_k^2$, this is implied by $p_k^2(P)=0$ and \eqref{equation:pontryaginclass}. The signature of $\ord([\Sigma_Q])\big(Q-s(Q)\cdot P\big)$ is obtained using the second formula in \cref{lemma:s(Q)}, together with $\sigma(P)=8$ and $\sigma(Q)=0$. Using \eqref{equation:Ahat}, the values of these signatures, together with $p_k^2(P)=0$ and \eqref{equation:pontryaginclass}, result in
\[\hat{A}\left(\ord([\Sigma_Q])\left(Q-s(Q)\cdot P\right)\right)=\ord([\Sigma_Q])\left(\frac{\lambda_k^2a_k^2T_k^2}{2^{4k+2}(2^{4k-1}-1)}+\frac{s(Q)}{2^{4k-2}(2^{4k-1}-1)}\right),\] which, using the second formula of \cref{lemma:s(Q)}, gives the desired value. The calculation of $p_{2m}$ is obtained by combining \eqref{equation:p2k} with the first formula for $s(Q)$ of \cref{lemma:s(Q)} and 
\[\frac{\lambda_k^2s_k^2}{s_{2k}}a_k^2(2k-1)!^2=\frac{\lambda_k^2\sigma_k^2}{s_{2k}j_k^2}.\] The last part of the statement follows from \cref{remark:usefulbasis}.\end{proof}

We proceed by computing the signatures and $\hat{A}$-genera realized by highly connected $4m$-manifolds. The following consequences of the von Staudt--Clausen theorem will be useful. A proof can be derived, for instance, from \cite[Ch.\,3]{BernoulliZeta}.

\begin{thm}[von Staudt--Clausen]\label{theorem:vSC}The prime factor decomposition of $\denom(\frac{\lvert B_{2n}\rvert }{n})$ is
\[\denom\left(\frac{\lvert B_{2n}\rvert }{n}\right)=\prod_{p-1|2n} p^{1+\nu_p(n)}.\] In particular, $j_{2n}=\denom(\frac{\lvert B_{4n}\rvert }{8n})$ is divisible by $j_n=\denom(\frac{\lvert B_{2n}\rvert }{4n})$, and $\nu_2(j_n)=\nu_2(n)+3$.
\end{thm}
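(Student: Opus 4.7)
The plan is to reduce the formula to the classical von Staudt--Clausen theorem together with Kummer's $p$-integrality of the divided Bernoulli numbers; both of these inputs are available in \cite[Ch.\,3]{BernoulliZeta}.

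First I would recall the classical statement that the denominator of $B_{2n}$ in lowest terms is exactly $\prod_{(p-1)\mid 2n} p$ and in particular is square-free, so that $\nu_p(B_{2n}) = -1$ for every prime $p$ with $(p-1)\mid 2n$ and $B_{2n}$ is $p$-integral otherwise. Next I would invoke Kummer's result that $|B_{2n}|/(2n)$ is $p$-integral whenever $(p-1)\nmid 2n$.

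Given these two ingredients, the main formula follows from a prime-by-prime analysis of $\nu_p(\denom(|B_{2n}|/n))$. For primes $p$ with $(p-1)\mid 2n$ one reads off $\nu_p(|B_{2n}|/n) = -1 - \nu_p(n)$ directly from von Staudt--Clausen, contributing the factor $p^{1+\nu_p(n)}$. For primes $p$ with $(p-1)\nmid 2n$ the condition forces $p$ to be odd (since $p-1=1$ always divides $2n$), hence $\nu_p(n) = \nu_p(2n)$, and writing $|B_{2n}|/n = 2\cdot |B_{2n}|/(2n)$ shows by Kummer that $|B_{2n}|/n$ is $p$-integral. Combining both cases yields the displayed product formula.

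The ``in particular'' statements are then purely computational. Specializing to $j_n = \denom(|B_{2n}|/(4n))$, the valuations at odd primes agree with those of $\denom(|B_{2n}|/n)$, while the extra factor $1/4$ shifts the $2$-adic valuation by $2$; since $(p-1) = 1$ for $p=2$ always divides $2n$, this gives $\nu_2(j_n) = 1 + \nu_2(n) + 2 = \nu_2(n)+3$. The divisibility $j_n\mid j_{2n}$ then follows from the inclusion $\{p : (p-1)\mid 2n\} \subseteq \{p : (p-1)\mid 4n\}$ of prime sets together with the inequality $1 + \nu_p(n)\le 1 + \nu_p(2n)$ on the relevant exponents. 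I do not anticipate any serious obstacle: the only non-elementary input beyond von Staudt--Clausen is Kummer's $p$-integrality, and the rest is bookkeeping.
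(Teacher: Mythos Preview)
Your argument is correct. The paper does not actually prove this statement but only remarks that it ``can be derived, for instance, from \cite[Ch.\,3]{BernoulliZeta}''; your proposal makes this derivation explicit via the classical von Staudt--Clausen theorem plus Kummer's $p$-integrality, which is exactly the intended route.
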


\begin{prop}\label{proposition:minimalsignature} 
There exists a closed $(2m-1)$-connected $4m$-manifold with signature \[\begin{cases}\sigma_{m}&\mbox{if }m\neq1\text{ is odd}\\ 2^{i_m}\gcd (\sigma_{m},\sigma_{m/2}^{2}) &\mbox{if }m\neq2,4\text{ is even}\\1 &\mbox{if }m=1,2,4,\end{cases}\] where \[i_m=\min\left(0,\nu_2(\ord([\Sigma_Q]))-2\nu_2(m)-4+2\nu_2(a_{m/2})\right),\] and the signature of any closed $(2m-1)$-connected $4m$-manifold is a multiple of this number.
\end{prop}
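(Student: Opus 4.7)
The plan is to compute the image of $\sigma\colon\Omega^{\langle 2m-1\rangle}_{4m}\to\mathbf{Z}$ directly from the explicit presentation of $\Omega^{\langle 2m-1\rangle}_{4m}/\Theta_{4m}$ given in \cref{proposition:kernel}, combined with the signature calculations of \cref{proposition:latticeodd} and \cref{proposition:latticeeven}. Since a homotopy $4m$-sphere has trivial middle cohomology and hence zero signature, $\sigma$ factors through this quotient, so the image is the subgroup of $\mathbf{Z}$ generated by the signatures of the chosen generators. The easy cases follow at once: for $m=1,2,4$ the projective planes $\bfC P^2,\bfH P^2,\bfO P^2$ realize signature $1$; and for $m\ne 1$ odd, the torsion-free generator $(\sigma_m/8)\cdot P$ has signature $\sigma_m$ while the potential torsion generator $Q$ has signature zero, yielding image $\sigma_m\mathbf{Z}$.

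For the remaining case $m=2k$ with $m\ne 2,4$, \cref{proposition:kernel} supplies a rank-two free group generated by $(\sigma_{2k}/8)\cdot P$ and $\ord([\Sigma_Q])\bigl(Q-s(Q)\cdot P\bigr)$, whose signatures are respectively $\sigma_{2k}$ and $S_Q:=-8\,\ord([\Sigma_Q])\,s(Q)$, so the image equals $\gcd(\sigma_{2k},S_Q)\cdot\mathbf{Z}$ and the task is to identify this with $2^{i_m}\gcd(\sigma_{2k},\sigma_k^2)$. I would feed the first formula of \cref{lemma:s(Q)} (with $\lambda_k=1$ since $k\ne 1,2$) into the definition of $S_Q$ to obtain the integer identity
\[
j_k^2\, S_Q \;=\; \ord([\Sigma_Q])\bigl(\sigma_k^2 + a_k^2\,\sigma_{2k}\,N\bigr),\qquad N\in\mathbf{Z},
\]
and then carry out a prime-by-prime comparison of $\nu_p(\gcd(\sigma_{2k},S_Q))$ with $\nu_p(\gcd(\sigma_{2k},\sigma_k^2))$.

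At odd primes this comparison is essentially formal: when $p\nmid j_k$, Anderson's bound $\ord([\Sigma_Q])\mid j_k^2$ (\cref{theorem:AndersonStolz}) forces $\nu_p(\ord([\Sigma_Q]))=0$ and $\nu_p(j_k^2)=0$, so $\nu_p(S_Q)=\nu_p(\sigma_k^2+a_k^2\sigma_{2k}N)$, and subtracting the multiple of $\sigma_{2k}$ in the gcd gives the desired equality; when $p\mid j_k$, \cref{theorem:vSC} gives $p-1\mid 2k$, and since the multiplicative order of $2$ modulo $p$ divides $2k$ but neither $2k-1$ nor $4k-1$, a one-line calculation shows $p$ divides neither $\sigma_k$ nor $\sigma_{2k}$, so both gcds are $p$-adic units. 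The hard part, and the only real bookkeeping, is the $2$-adic case that produces the factor $2^{i_m}$. I would use \cref{theorem:vSC} together with the classical fact that $\num(|B_{2k}|/4k)$ is odd to nail down $\nu_2(\sigma_k)=2k+1+\nu_2(a_k)$, $\nu_2(\sigma_{2k})=4k+1$, and $\nu_2(j_k)=\nu_2(k)+3$; the Bézout relation defining $c_{2k},d_{2k}$ forces $c_{2k}$, and hence $N$, to be odd. Substitution then yields $\nu_2(\sigma_k^2+a_k^2\sigma_{2k}N)=4k+1+2\nu_2(a_k)$ and therefore $\nu_2(S_Q)=\nu_2(\ord([\Sigma_Q]))+2\nu_2(a_k)+4k-2\nu_2(k)-5$, from which the discrepancy $\nu_2(\gcd(\sigma_{2k},S_Q))-\nu_2(\gcd(\sigma_{2k},\sigma_k^2))$ is exactly $i_m$. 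Existence at the bound is automatic: by Bézout a $\mathbf{Z}$-linear combination of the two basis classes realizes the minimal signature, and such a class lifts through the exact sequence \eqref{equation:WallSES} to the class of a genuine closed $(2m-1)$-connected $4m$-manifold.
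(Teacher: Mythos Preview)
Your proposal is correct and follows essentially the same approach as the paper: reduce to the signatures of the generators from \cref{proposition:kernel}, express $S_Q$ via the first formula of \cref{lemma:s(Q)}, and compare $\gcd(\sigma_{2k},S_Q)$ with $\gcd(\sigma_{2k},\sigma_k^2)$ prime by prime, with the $2$-adic bookkeeping producing $i_m$. The only notable difference is cosmetic: for odd primes dividing $j_k$, you argue via the multiplicative order of $2$ modulo $p$ to show $p\nmid 2^{2k-1}-1$ and $p\nmid 2^{4k-1}-1$, whereas the paper reaches the same conclusion by noting that the integrality of $T_k$ forces $j_k\mid 2^{2k-1}(2^{2k}-1)$ and that $(2^{2k}-1)$ and $(2^{4k-1}-1)$ are coprime; both routes show $j_k$ (hence $\ord([\Sigma_Q])$) shares no odd prime with $\sigma_{2k}$.
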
 

\begin{rem}
Note that we obtain $\nu_2(\ord([\Sigma_Q]))\le2\nu_2(m)+4$ from \cref{theorem:AndersonSchultz} and \ref{theorem:vSC}.
\end{rem}

\begin{rem}\label{remark:comparisonalmostparallelizability}\cref{proposition:minimalsignature} should be compared with the analogous result for $(2m-1)$-connected $4m$-manifolds that are assumed to be almost parallelizable. It follows from work of Milnor--Kervaire \cite[p.\,457]{KervaireMilnorBernoulli} that, under this additional assumption, the minimal positive signature that occurs is $\sigma_m$, independently  of the parity of $m$. Since $i_m\le0$, one gets \[2^{i_m}\gcd (\sigma_{m},\sigma_{m/2}^{2})<\frac{\sigma_m}{2^{m-\nu_2(m)-8}}\] by \cite[Thm 1.5.2(c)]{AntonelliBurgheleaKahn}, so the minimal positive signature becomes, for $m$ even, significantly larger if one restricts to manifolds that are almost parallelizable. 
\end{rem}

The following proof of \cref{proposition:minimalsignature} owes a significant intellectual debt to a computation due to Lampe \cite[Satz 1.3]{Lampe}, to whom the authors would like to express their gratitude for sending them a copy of his diploma thesis.

\begin{proof}[Proof of \cref{proposition:minimalsignature}]For $m\neq 1$ odd, the claim is a consequence of \cref{proposition:latticeodd}, and for $m=1,2,4$, it follows from the existence of the manifolds $\bfC P^2$, $\bfH P^2$, and $\bfO P^2$ of signature $1$. For $m=2k\neq2,4$, it is, by \cref{proposition:latticeeven}, sufficient to show that the integer described in the statement is the greatest common divisor of
\[\sigma\left((\sigma_{2k}/8)\cdot P\right)=\sigma_{2k}\quad\text{and}\quad\sigma\left(\ord([\Sigma_Q])\big(Q-s(Q)\cdot P\big)\right)=-8\ord([\Sigma_Q])s(Q).\] From the first formula of \cref{lemma:s(Q)}, one sees that there is an odd integer $b$ such that
\[-8\ord([\Sigma_Q])s(Q)=\frac{1}{j_k^2}\left(\ord([\Sigma_Q])(\sigma_k^2+\sigma_{2k}a_k^2b)\right).\] Using $\nu_2(\sigma_k)=\nu_2(a_k)+2k+1$ and $\nu_2(j_k)=\nu_2(k)+3$, we compute 
\[\nu_2\left(\gcd\left(\sigma_{2k},8\ord([\Sigma_Q])s(Q)\right)\right)=\min\left(4k+1,\nu_2\left(\ord([\Sigma_Q])\right)+4k+2\nu_2(a_k)-2\nu_2(k)-5\right).\] 
To determine the odd part $\gcd(\sigma_{2k},8\ord([\Sigma_Q])s(Q))_{\odd}$ of the greatest common divisor, we note that, since $T_k$ is an even integer for $k>1$, the number $j_k$ divides $2^{2k-1}(2^{2k}-1)$. This, together with the fact that $(2^{2k}-1)$ and $(2^{4k-1}-1)$ are coprime, implies that $j_k$ and $(2^{4k-1}-1)$ are coprime. But since $j_k$ is also coprime to $\num(\frac{\lvert B_{4k}\rvert }{8k})$ by the von Staudten--Clausen theorem, it cannot share odd prime divisors with $\sigma_{2m}$. As $\ord([\Sigma_Q])$ divides $j_k^2$ by \cref{theorem:AndersonSchultz}, the conclusion also holds for $\ord([\Sigma_Q])$ and $\sigma_{2m}$. This leads to
\[\gcd\left(\sigma_{2k},8\ord([\Sigma_Q])s(Q)\right)_{\odd}=\gcd\left(\sigma_{2k},\ord([\Sigma_Q])(\sigma_k^2+\sigma_{2k}a_k^2b)\right)_{\odd}=\gcd(\sigma_{2k},\sigma_k^2)_{\odd},\] which implies the statement, because $\nu_2(\gcd(\sigma_{2k},\sigma_k^2))=4k+1$.
\end{proof} 

Since $\nu_2(\sigma_m)=2m+1+\nu_2(a_m)$ and $i_m\ge -2\nu_2(m)-4$, we obtain the following divisibility result for the signature as corollary of \cref{proposition:minimalsignature}. 

\begin{cor}\label{corollary:divisibilitysignature}The signature of a closed $(2m-1)$-connected manifold of dimension $4m$ for $m\neq1,2,4$ is divisible by $2^{2m+2}$ if $m$ is odd and by $2^{2m-2\nu_2(m)-3}$ if $m$ is even.
\end{cor}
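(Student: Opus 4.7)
The statement is phrased as an immediate corollary of \cref{proposition:minimalsignature}, so the plan is to unwind the $2$-adic arithmetic encoded in the formulas stated there. The main steps are just to compute $\nu_2$ of the minimal signature in each parity, using the identity $\nu_2(\sigma_n)=2n+1+\nu_2(a_n)$ that is implicit in the definition of $\sigma_n$.

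For $m$ odd with $m\neq 1$, \cref{proposition:minimalsignature} tells us that every signature is a multiple of $\sigma_m$; plugging in $a_m=2$ gives $\nu_2(\sigma_m)=2m+2$, which is exactly the claimed divisibility. For $m\neq 2,4$ even, the minimal signature is $2^{i_m}\gcd(\sigma_m,\sigma_{m/2}^2)$, where $i_m=\min(0,\nu_2(\ord([\Sigma_Q]))-2\nu_2(m)-4+2\nu_2(a_{m/2}))$. The plan is to bound $i_m$ from below by the trivial bound $i_m\geq -2\nu_2(m)-4$ (which is the content of the remark after \cref{proposition:minimalsignature} and uses only $\nu_2(\ord([\Sigma_Q]))\geq 0$), and then compute the $2$-adic valuation of the gcd separately, using $a_m=1$ for $m$ even to get $\nu_2(\sigma_m)=2m+1$, and $\nu_2(\sigma_{m/2}^2)=2m+2+2\nu_2(a_{m/2})\geq 2m+2>2m+1$, so that $\nu_2(\gcd(\sigma_m,\sigma_{m/2}^2))=2m+1$.

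Combining, the minimal signature has $2$-adic valuation at least $i_m+(2m+1)\geq(2m+1)-2\nu_2(m)-4=2m-2\nu_2(m)-3$, which is the claim. There is no real obstacle here beyond verifying these two short valuation computations; the deeper content is already packed into \cref{proposition:minimalsignature}. I would present the argument as two short paragraphs, one per parity, each consisting of a single chain of inequalities, and include a remark that the bound $i_m\geq -2\nu_2(m)-4$ is clear because the other term in the $\min$ defining $i_m$ is nonnegative for even $m$ exactly when $\nu_2(\ord([\Sigma_Q]))\geq 2\nu_2(m)+4-2\nu_2(a_{m/2})$, which may or may not hold but in any case does not worsen the bound.
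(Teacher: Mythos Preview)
Your proposal is correct and follows exactly the same approach as the paper: the authors derive the corollary immediately from \cref{proposition:minimalsignature} using the identity $\nu_2(\sigma_m)=2m+1+\nu_2(a_m)$ and the bound $i_m\ge -2\nu_2(m)-4$, which are precisely the two ingredients you unpack. Your write-up is more detailed, but there is no substantive difference in strategy.
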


For $m\ge 2$, closed $(2m-1)$-connected $4m$-manifolds admit a spin structure, so their $\hat{A}$-genus is integral. We compute it as follows.

\begin{prop}\label{proposition:minimalAhat}
There exists a closed $(2m-1)$-connected $4m$-manifold with $\hat{A}$-genus \[\begin{cases}2\num(\frac{\lvert B_{2m}\rvert }{4m})&\mbox{if }m\neq1\text{ is odd}\\\gcd(\num(\frac{\lvert B_{2m}\rvert }{4m}),\num(\frac{\lvert B_{m}\rvert }{2m})^2)&\mbox{if }m\text{ is even,}\end{cases}\] and the $\hat{A}$-genus of any closed $(2m-1)$-connected $4m$-manifold is a multiple of this number.
\end{prop}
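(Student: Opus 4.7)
My plan is to split the argument by the parity of $m$, with the odd case being immediate from \cref{proposition:latticeodd} and the even case reducing to a number-theoretic computation using \cref{proposition:latticeeven} together with the von Staudt--Clausen theorem.

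For $m \neq 1$ odd, the torsion-free quotient of $\Omega^{\langle 2m-1\rangle}_{4m}$ is cyclic with generator $(\sigma_m/8)\cdot P$ of $\hat{A}$-genus $-2\num(|B_{2m}|/4m)$, so the image of the induced morphism $\hat{A}\colon \Omega^{\langle 2m-1\rangle}_{4m}/\Theta_{4m}\to\bfZ$ is the cyclic subgroup generated by $2\num(|B_{2m}|/4m)$.

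For $m=2k$ even with $k\ge 3$, \cref{proposition:latticeeven} identifies the image of $\hat{A}\colon\Omega^{\langle 4k-1\rangle}_{8k}/\Theta_{8k}\to\bfZ$ as the cyclic subgroup generated by $\gcd(\alpha,\beta)$, where $\alpha=\num(|B_{4k}|/8k)$ is the $\hat{A}$-value of the first basis vector and $\beta$ is that of the second. The task thus reduces to showing $\gcd(\alpha,\beta)=\gcd(\alpha,\num(|B_{2k}|/4k)^2)$. The central step is to compute $\beta$ modulo $\alpha$. To avoid rational denominators, I would work with $\mu_k r\beta$, where $r=j_k^2/\ord([\Sigma_Q])$ is an integer by Anderson's part of \cref{theorem:AndersonStolz}. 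Writing $\rho=|B_{2k}|/4k$ and $\tau=|B_{4k}|/8k$ and substituting $\rho=\num(\rho)/j_k$ and $\tau=\alpha/j_{2k}$ into the formula for $\beta$, a direct computation should yield the clean identity
\[\mu_k r\beta \;=\; a_k^2 d_{2k}j_{2k}\num(\rho)^2 \;+\; 2(-1)^{k+1} a_k^2 j_k\alpha d_{2k}\num(\rho).\]
Reducing modulo $\alpha$ kills the second summand, and the Bezout identity $d_{2k}j_{2k}\equiv 1\pmod{\alpha}$ then gives $\mu_k r\beta\equiv a_k^2\num(\rho)^2\pmod{\alpha}$. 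Provided $\mu_k r a_k^2$ is a unit modulo $\alpha$, this implies $\beta\equiv u\cdot\num(\rho)^2\pmod{\alpha}$ for some unit $u$, whence $\gcd(\alpha,\beta)=\gcd(\alpha,\num(\rho)^2)$, as needed. The special cases $k=1,2$ admit an immediate separate treatment: there $\alpha=\num(|B_{4k}|/8k)=1$, so the image of $\hat{A}$ is all of $\bfZ$, matching the trivial gcd on the right.

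The hard part will be the invertibility assertion $\gcd(\alpha,\mu_k r a_k^2)=1$. I plan to deduce it from three coprimality statements, each a consequence of the von Staudt--Clausen theorem (\cref{theorem:vSC}): $\alpha$ is odd, since $\nu_2(j_{2k})\ge 3$ while $\alpha,j_{2k}$ are coprime as reduced numerator and denominator; $\gcd(\alpha,j_k)=1$, since $j_k\mid j_{2k}$; and $\gcd(\alpha,a_k)=1$, immediate from $a_k\in\{1,2\}$ and $\alpha$ odd. Combined with $r\mid j_k^2$ and $\mu_k\in\{1,2\}$, these give the required invertibility. Once the closed-form identity for $\mu_k r\beta$ above is in hand, no individual step is especially subtle, but the interplay of these coprimalities together with the manipulation of Bezout and Anderson's divisibility is what carries the argument.
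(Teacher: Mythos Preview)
Your proposal is correct and follows essentially the same route as the paper: reduce to \cref{proposition:latticeodd} for $m$ odd, and for $m=2k$ use \cref{proposition:latticeeven} together with Anderson's divisibility $\ord([\Sigma_Q])\mid j_k^2$, the von Staudt--Clausen coprimalities, and the B\'ezout relation $d_{2k}j_{2k}\equiv 1\pmod{\alpha}$ to show $\gcd(\alpha,\beta)=\gcd(\alpha,\num(|B_{2k}|/4k)^2)$. The only cosmetic differences are that you clear denominators by multiplying through with $\mu_k r$ and treat $k=1,2$ separately (where $\alpha=1$), whereas the paper keeps the fractional expression and handles all $k$ uniformly via the $\mu_k$-dependent formula in \cref{proposition:latticeeven}.
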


\begin{proof}Arguing similarly as in the proof of \cref{proposition:minimalsignature}, the claim for $m$ odd follows from \cref{proposition:latticeodd}. To prove the case of $m$ even using \cref{proposition:latticeeven}, we need to compute the greatest common divisor of $\num(\frac{\lvert B_{4k}\rvert }{8k})$ and 
\begin{multline*}2\ord([\Sigma_Q])\frac{a_k^2}{\mu_k}\num\left(\frac{\lvert B_{4k}\rvert }{8k}\right)d_{2k}\frac{\lvert B_{2k}\rvert }{4k}\left(\frac{\lvert B_{2k}\rvert }{\lvert B_{4k}\rvert }+(-1)^{k+1}\right)\\=\frac{1}{j_k^2\mu_k}\ord([\Sigma_Q])a_k^2d_{2k}\num\left(\frac{\lvert B_{2k}\rvert }{4k}\right)\left(\num\left(\frac{\lvert B_{2k}\rvert }{4k}\right)j_{2k}+2(-1)^{k+1}j_k\num\left(\frac{\lvert B_{4k}\rvert }{8k}\right)\right)\end{multline*} As $j_k, j_{2k}, a_k,\mu_k$, and $d_{2k}$ are coprime to $\num(\frac{\lvert B_{4k}\rvert }{8k})$, the number in question agrees with \[\gcd\left(\num\left(\frac{\lvert B_{4k}\rvert }{8k}\right),\ord([\Sigma_Q])\num\left(\frac{\lvert B_{2k}\rvert }{4k}\right)^2\right).\] But $\ord([\Sigma_Q])$ divides $j_k^2$ by \cref{theorem:AndersonSchultz}, so the von Staudt--Clausen theorem implies that it has no common divisors with $\num(\frac{\lvert B_{4k}\rvert }{8k})$. This yields the result.
\end{proof}

\begin{rem}\label{remark:computercalculationsignature}\ 
\begin{enumerate}
\item \label{remark:computercalculationAhat}Computer calculations show that for $m<2678$ even, the greatest common divisor of $\sigma_m$ and ${\sigma_{m/2}}^2$ is a power of $2$, whereas it is  $2^{2\cdot2678+1}\cdot 34511$ in the case $m=2678$. Since $\nu_2(\gcd(\sigma_m,{\sigma_{m/2}}^2))=2m+1$, the minimal positive signature of a closed highly connected $4m$-manifold for $m\neq 2,4$ even, $m< 2678$, equals $2^{i_m+2m+1}$ by \cref{proposition:minimalsignature}. 

\item Similar computations show that $\num(\frac{\lvert B_{2m}\rvert }{4m})$ and $\num(\frac{\lvert B_{m}\rvert }{2m})^2$ are coprime for $m<44000$ even, and hence \cref{proposition:minimalAhat} implies that in these dimensions, there exists a closed $(2m-1)$-connected $4m$-manifold of $\hat{A}$-genus $1$. We do not know whether $\num(\frac{\lvert B_{2m}\rvert }{4m})$ and $\num(\frac{\lvert B_{m}\rvert }{2m})^2$ are coprime for all even $m$.  
\end{enumerate}
\end{rem}


\section{Characteristic numbers of bundles over surfaces with highly connected fiber}\label{section:lattices}
 This section serves to derive consequences for characteristic numbers of smooth bundles over surfaces and for $\oH^2(\BDiff(M);\bfZ)$, leading to proofs of \cref{bigthm:signatures_Ahat}, \cref{bigcor:signatures}, and \cref{bigtheorem:cobordismlattice}. Throughout this section, we require $M$ to be a smooth, closed, $2n$-dimensional, highly connected, almost parallelizable (or, equivalently, $n$-parallelizable) manifold.

\subsection{Characteristic numbers of total spaces}
In \cref{proposition:latticeodd} and \ref{proposition:latticeeven}, we determined the lattices of Pontryagin numbers, signatures and $\hat{A}$-genera realized by classes in the subgroup of oriented bordism classes
\[\im\left(\Omega_{2n+2}^{\langle n \rangle}\ra\Bord_{2n+2}\right) \cap \sigma^{-1} (4 \cdot \bfZ)\subseteq\Bord_{2n+2},\]
which, combined with \cref{bigtheorem:bordismimage}, provides divisibility constraints for the respective invariants for total spaces of smooth $M$-bundles over surfaces and completely computes these invariants as long as $g(M)\ge5$. The consequences for the signature and $\hat{A}$-genus of our computation, which are carried out in \cref{proposition:minimalsignature}, \cref{corollary:divisibilitysignature}, and \cref{proposition:minimalAhat} imply
\cref{bigthm:signatures_Ahat} as well as \cref{bigcor:signatures}.

\subsection{Generalized Miller--Morita--Mumford classes}\label{section:MMMclassessmoothly}We prove \cref{bigtheorem:cobordismlattice}, i.e., the computation of the torsion free quotient $\oH^2(\BDiff(M);\bfZ)_{\free}$ for $g(M)\ge7$ and $2n\ge6$ in terms of generalized Miller--Morita--Mumford classes (as recalled in the introduction). The root of the computation is the analogous result for rational cohomology is due to Galatius--Randal-Williams: their high dimensional analogue of the Madsen--Weiss theorem (see \cref{theorem:parametrizedpontryaginthom}) provides an isomorphism \[\oH^*\left(\BDiffuo(M,D^{2n});\bfZ\right) \cong \oH^*(\Omega^\infty_M\MT\theta_n;\bfZ)\]  for $2n\ge6$, $g(M)\ge7$, and $*\le 2$. The cohomology groups $\oH^*(\Omega^\infty_M\MT\theta_n;\bfZ)$ are finitely generated and can be computed rationally to be the free graded commutative algebra
\[\oH^*(\Omega^\infty_M\MT\theta_n;\bfQ) \cong \Lambda\left(\oH^{*+2n>0}(\BSO(2n)\langle n\rangle;\bfQ)\right),\] see \,\cite[Ch.\,2.5]{GRWstable}. 
As $\oH^{*}(\BSO(2n)\langle n\rangle;\bfQ)$ is a polynomial ring in the Pontryagin classes $p_i$ of degree $4i>n$, we arrive at
\begin{equation}\label{equ:computation}\oH^2\left(\BDiffuo(M,D^{2n});\mathbf{Q}\right) = \begin{cases}
0  & \text{if } 2n \equiv 0 \Mod{4}\\
\mathbf{Q}\kappa_{p_{(n+1)/2}} & \text{if } 2n \equiv 2 \Mod{8}\\
\mathbf{Q}\kappa_{p_{(n+1)/2}} \oplus \mathbf{Q}\kappa_{p_{(n+1)/4}^2} & \text{if } 2n \equiv 6 \Mod{8}
\end{cases}
\end{equation} for $2n\ge6$ and $g\ge7$. Evaluating diffeomorphisms of $M$ at a fixed point $M$ induces the first homotopy fiber sequence of 
\[\BDiffuo(M,*)\ra \BDiff(M)\ra M\quad \text{and}\quad \BDiffuo(M,D^{2n})\ra \BDiff(M,*)\ra \BSO(2n),\] whereas the second one is given by taking the derivative at $*\in M$. From the Serre spectral sequence of these fibrations, one sees that the formula \eqref{equ:computation}, as well as the finite generation property, holds equally well for $\BDiff(M)$ instead of $\BDiffuo(M,D^{2n})$, an observation which incidentally yields the description of $\oH^2(\BDiff(M);\bfQ)$ mentioned in the introduction. Excluding the trivial case, we assume $4m=2n+2$. Since classes in $\Omega_{4m}^{\langle 2m-1\rangle}$ have vanishing Stiefel--Whitney numbers and are, up to torsion, detected by Pontryagin numbers (see e.g.~\cref{proposition:latticeodd} and \ref{proposition:latticeeven}), the induced morphism $(\Omega_{4m}^{\langle 2m-1\rangle})_{\free}\ra\Bord_{4m}$, defined on the torsion free quotient, is injective, and the rank of its image agrees with that of $\Bord_2(\BDiffuo(M,D^{2n}))_{\free}$. From this, using 
\cref{bigtheorem:bordismimage}, we conclude that the morphism $\Bord_2(\BDiff(M))\ra\Bord_{4m}$ induces an isomorphism of the form
\begin{equation}\label{eq:BordismGroupIdentification}
\Bord_2\left(\BDiff(M)\right)_{\free}\cong\im\left(\Omega^{\langle 2m-1\rangle}_{4m}\ra\Bord_{4m} \right)\cap \sigma^{-1} (4 \cdot \bfZ).
\end{equation}
 In view of the commutative diagram \eqref{eq:CommutativeDiagramIntroduction} of the introduction, the functional on the left hand side of \eqref{eq:BordismGroupIdentification} induced by a class $\kappa_{c}$ for $c\in\oH^{4m}(\BSO;\bfZ)$ corresponds via this isomorphism to the usual characteristic number defined by $c$ on the right hand side. Propositions \ref{proposition:latticeodd} and \ref{proposition:latticeeven} provide an explicit basis for the right hand side and represent the functionals induced by Pontryagin numbers in terms of this basis. Computing the appropriate change of basis matrix, the integral dual of this basis can be expressed in terms of Pontryagin numbers, which results in the basis of $\oH^2(\BDiff(M);\bfZ)_{\free}$ described in \cref{bigtheorem:cobordismlattice}. 
 
 \begin{rem}The conclusions of \cref{bigtheorem:cobordismlattice} are also valid for $\BDiffuo(M,D^{4m-2})$ instead of $\BDiff(M)$, by the same argument.
 \end{rem}


\section{Topological bundles}\label{TopologicalBundles}
The purpose of this final section is to study the topological analogue of the bundles considered in the previous sections, i.e., fiber bundles $\pi\colon E\ra S$ over oriented surfaces with fiber a smooth highly connected almost parallelizable manifold $M$ of even dimension and structure group $\Homeo(M)$ as opposed to $\Diff(M)$. The main work in this section is to compute the values of the Pontryagin numbers of the total spaces of such bundles, or equivalently, the values of the Miller--Morita--Mumford classes of $\pi$. We shall see that the bordism group of topological $M$-bundles agrees rationally with the analogous group for smooth bundles, but differs integrally. This gives rise to obstructions to smoothing topological $M$-bundles over surfaces up to bordism.

\subsection{Rational invariance}
We begin by showing that the bordism group of $M$-bundles over surfaces is rationally independent of whether we consider smooth or topological bundles (at least in the presence of large genus), or equivalently, that the canonical map between second cohomology groups $\oH^2(\BHomeo(M);\bfQ)\ra \oH^2(\BDiff(M);\bfQ)$ is an isomorphism.

\begin{prop}\label{lem:homeodiffrationally}Let $M$ be a closed, smooth, $(n-1)$-connected, almost parallelizable $2n$-manifold. If $g(M)\ge7$ and $2n\ge6$, then the canonical map
\[\oH^2(\BHomeo(M);\bfQ)\ra \oH^2(\BDiff(M);\bfQ)=\begin{cases}
0  & \text{if } 2n \equiv 0 \Mod{4}\\
\mathbf{Q}\kappa_{p_{(n+1)/2}} & \text{if } 2n \equiv 2 \Mod{8}\\
\mathbf{Q}\kappa_{p_{(n+1)/2}} \oplus \mathbf{Q}\kappa_{p_{(n+1)/4}^2} & \text{if } 2n \equiv 6 \Mod{8}
\end{cases}\] is an isomorphism.
\end{prop}

\begin{proof}
The computation of $\oH^2(\BDiff(M);\bfQ)$ follows from \eqref{equ:computation} and the subsequent discussion, and shows that this group is generated by generalized Miller--Morita--Mumford classes associated to monomials in Pontryagin classes, so the surjectivity of the first morphism follows from the topological invariance of rational Pontryagin classes. For the injectivity part, we first show that the homotopy fiber $\Homeo(M)/\Diff(M)$ of the canonical map $\BDiff(M)\ra \BHomeo(M)$ has finitely many components and vanishing first rational cohomology. To see this, recall that smoothing theory (see e.g.\,\cite[Thm 4.2]{BurgheleaLashofSmoothing}) identifies $\Homeo(M)/\Diff(M)$ with a collection of path-components of the space $\Gamma(TM)$ of lifts
\begin{center}
\begin{tikzcd}
&\BSO(2n)\arrow[d]\\
M\arrow[r,swap]\arrow[ur,dashed,bend left]&\BSTop(2n)
\end{tikzcd}
\end{center} of a classifying map for the oriented topological tangent bundle of $M$, at least if we use a model for the canonical map $\BSO(2n)\ra\BSTop(2n)$ that is a fibration. Restricting such lifts to the $n$-skeleton $\vee^{k}S^n\subset M$ induces a map $\Gamma(TM)\ra\Maps(\vee^{k}S^n,\STop(2n)/\SO(2n))$ whose homotopy fibers are either empty or equivalent to $\Maps(S^{2n},\STop(2n)/\SO(2n))$. From this, we see that the claim on $\Homeo(M)/\Diff(M)$ follows from the fact that  $\pi_k\STop(2n)/\SO(2n)$ is finite for $*\le 2n+1$ (see e.g.\,\cite[Cor.\,5.2 b), p.\,38 (7)]{BurgheleaLashofSmoothing}). Since $\oH^1(\Homeo(M)/\Diff(M);\bfQ)$ vanishes, the edge homomorphism \[\oH^2(\BHomeo(M);\oH^0(\Homeo(M)/\Diff(M);\bfQ))\ra \oH^2(\BDiff(M);\bfQ)\] of the corresponding Serre spectral sequence is injective. The morphism in the claim agrees with this edge homomorphism, precomposed with the map \[\oH^2(\BHomeo(M);\bfQ)\ra\oH^2(\BHomeo(M);\oH^0(\Homeo(M)/\Diff(M);\bfQ))\] induced by the map of coefficient systems $\varepsilon\colon \bfQ\ra \oH^0(\Homeo(M)/\Diff(M);\bfQ)$ resulting from $\Homeo(M)/\Diff(M)\ra *$. Consequently, the claim follows from $\varepsilon$ being split injective, which holds by transfer since  $\oH^0(\Homeo(M)/\Diff(M);\bfQ)$ is a finite permutation module as $\Homeo(M)/\Diff(M)$ has finitely many components.
\end{proof}

\subsection{Topological Pontryagin numbers}
As a next step, we determine the lattice spanned by the values of the characteristic classes $\kappa_{p_{(n+1)/2}}$ and $\kappa_{p^2_{(n+1)/4}}$ on topological $M$-bundles over surfaces, or equivalently the values of the corresponding Pontryagin classes on the total spaces $E$ of such bundles. It turns out to be more convenient to consider the different basis $p^2_{(n+1)/4}$ and $\cL_{(n+1)/2}$ first, so we shall do that. In this subsection, we compute the minimal value of $p^2_{(n+1)/4}(E)$ and show that it can be realized by a total space $E$ with $\cL_{(n+1)/2}(E)=0$. Given this, to determine the lattice, it suffices to compute the values of $\cL_{(n+1)/2}(E)$, which we shall do in the next subsection by proving \cref{bigthm:signaturestopologically}.

We begin with a standard lemma on highly connected topological manifolds.

\begin{lem}\label{lem:topologicallyevenform}Let $E$ be an oriented, closed, highly connected, topological manifold of dimension $4m\neq4,8,16$.
\begin{enumerate}
\item The intersection form of $E$ is even. In particular, the signature $\sigma(E)$ is divisible by $8$.
\item For $m=2k$, the Pontryagin number $p_k(E)^2$ is contained in the subgroup $2(p^{\min}_k)^2\cdot\bfZ\subseteq\bfQ$, where $p^{\min}_k\in\bfQ$ is an additive generator of the image of $p_k\colon\pi_{4k}\BSTop\ra\bfQ$.
\end{enumerate}
\end{lem}
\begin{proof}As $E$ is $(4k-1)$-connected, we can represent the middle homology \[\textstyle{\pi_{4k}(E)\cong\oH_{4k}(E;\bfZ)\cong\bigoplus^l_{i=1}\oH_{4k}(S^{4k}_i)}\] by locally flatly embedded $4k$-spheres $S^{4k}_i\subseteq E$ (see e.g.\,\cite[Thm 1]{Pedersen}), which admit topological normal bundles $\nu_i\colon S^{4k}_{i}\ra\BSTop(4k)$ by \cite[Cor.\,5.5]{RourkeSanderson}. Writing $(a_{ij})_{1\le i,j\le l}$ for the (symmetric) intersection matrix of $E$ with respect to the basis given by the $S^{4k}_i$, claim (i) follows from showing that the diagonal entries $a_{ii}$ are even. This can be seen, as in the smooth case, by using that $a_{ii}$ agrees with the image of $\nu_i$ under the composition
\[\pi_{4k}\STop(4k)\ra\pi_{4k}\BSG(4k)\cong \pi_{4k-1}S^{4k}\xrightarrow{H}\bfZ,\] where the first morphism assigns to a topological $\bfR^{4k}$-bundle its underlying spherical fibration and $H$ is the Hopf invariant, whose values are even as long as $4k\neq4,8$. This also implies (ii), since we can compute the Pontryagin number as
\[\textstyle{p_k^2(E)=\sum_{1\le i<j\le m}2a_{ij}p^2_k(\nu_i)+\sum_{1\le i\le m}a_{ii}p^2_k(\nu_i)},\] which is contained in $2(p^{\min}_k)^2\cdot\bfZ\subset\bfQ$ as $a_{ii}$ is even and $p_k(\nu_i)\in p_k^{\min}\cdot\bfZ\subseteq\bfQ$ .
\end{proof}

The value $p_k^{\min}$ appearing in the previous lemma was determined by Brumfiel as follows.

\begin{lem}[Brumfiel] \label{lem:BrumfielMinimalPontryagin} For $k\neq1,2$, the image of $p_k\colon \pi_{4k}\BSTop\ra \bfQ$ is generated by \[p_k^{\min}= \frac{8}{\sigma_k}a_k(2k-1)!2^{\nu_2(k)+3}.\]
\end{lem}
\begin{proof}The proof of \cite[Lem.\,4.5]{Brumfiel} shows that, after precomposition with the natural map $\pi_{4k}\BSPL\ra\pi_{4k}\BSTop$, the image of the map in question is generated by $(1/\lvert \bP_{4k}\rvert )a_k(2k-1)!2^{\nu_2(j_k)}$. But $\pi_{4k}\BSPL\ra\pi_{4k}\BSTop$ is an isomorphism for $k\neq 1$ as $\STop/\SPL\simeq K(\bfZ/2,3)$ by \cite[Essay V Thm 5.5]{KirbySiebenmann}, so the claim follows from the identities $\lvert\bP_{4k}\rvert=\sigma_k/8$ and $\nu_2(j_k)=\nu_2(k)+3$ (see \cref{theorem:vSC}).
\end{proof}

Before turning towards the values of $p_k^2(E)$, we prove two more auxiliary lemmas.

\begin{lem}\label{lem:surjectivityofdiscbundles}Restricting homeomorphisms of $D^{4k-1}$ fixing the center to the interior $\interior{D^{4k-1}}\cong\bfR^{4k-1}$ and stabilising by $(-)\times \id_\bfR$ induces a composition
\[\pi_{4k}\BHomeo(D^{4k-1},0)\ra \pi_{4k}\BSTop(4k-1)\ra\pi_{4k}\BSTop\] whose first morphism is surjective for all $k\ge1$ and the second one for $k\neq1,2$.
\end{lem}
\begin{proof}
Both claims follow from the fact that the relative homotopy groups $\pi_*(\STop/\SO,\STop(4k-1)/\SO(4k-1))$ vanish for $*\le 4k+1$ (see e.g.\,\cite[Essay V,§5,5.0]{KirbySiebenmann}). This is explained in \cite[Cor.\,1.2 (i)]{Stern} for the first map, and follows for the second one from a diagram chase in the commutative diagram with exact rows
\begin{center}
\begin{tikzcd}[column sep=0.4cm, row sep=0.5cm]
\pi_{4k}\BSO(4k-1)\rar\arrow[d,two heads]&\pi_{4k}\BSTop(4k-1)\dar\arrow[r,two heads]&\pi_{4k-1}\STop(4k-1)/\SO(4k-1)\arrow[d,"\cong"]\\
\pi_{4k}\BSO\rar&\pi_{4k}\BSTop\rar&\pi_{4k-1}\STop/\SO,
\end{tikzcd}
\end{center}
which is induced by the appropriate map of fiber sequences. The right vertical map is an isomorphism because of the above mentioned vanishing of $\pi_*(\STop/\SO,\STop(4k-1)/\SO(4k-1))$ in a range, the surjectivity of the left vertical map for $k\neq1,2$ is standard (c.f.\,\cite[§1 B)]{Levine}), and the upper right horizontal map is surjective as $\pi_{4k-1}\BSO(4k-1)\ra\pi_{4k-1}\BSTop(4k-1)$ is injective (see e.g.\,\cite[Prop.\,5.4 (iv)]{BurgheleaLashofSmoothing}).
\end{proof}

\begin{lem}\label{lem:ConstructionBundleMinimalPontryagin}
Given classes $\xi,\eta \in\pi_{4k}\BSTop(4k-1)$ and a closed, highly connected, almost parallelizable $(8k-2)$-manifold $M$ with $g(M)\ge1$, there exists an oriented topological $M$-bundle $\pi\colon E \ra S^1\times S^1$ such that \[p_k^2(E)=2 p_k(\xi)p_k(\eta)\quad\text{and}\quad\sigma(E) = 0.\]
\end{lem}
\begin{proof}We first prove the claim for $M=W_1\coloneq S^{4k-1}\times S^{4k-1}$. \cref{lem:surjectivityofdiscbundles} ensures that we can pick representatives $\xi,\eta \colon S^{4k}\ra \Homeo(D^{4k-1},0)$ of the respective elements in $\pi_{4k} \BSTop(4k-1)$, and we shall choose them such that they are constantly the identity when restricted to a neighborhood of the lower hemisphere $D^{4k}_-\subseteq S^{4k}$. The map $\xi$ gives rise to a homeomorphism $f_{\xi}$ of $S^{4k-1} \times D^{4k-1}_-$ by mapping $(x,y)$ to $(x,\xi(x)(y))$, which is the identity on a neighborhood of $D^{4k-1}_-\times D^{4k-1}_-$. Similarly, by switching the role of the two coordinates and replacing $\xi$ by $\eta$, we obtain a homeomorphism $f_{\eta}$ of $D^{4k-1}_- \times S^{4k-1}$. Extending $f_{\xi}$ and $f_{\eta}$ the identity gives rise to two strictly commuting homeomorphism $f'_{\xi}$ and $f'_\eta$ of
\[W_{1,1}\coloneq S^{4k-1}\times S^{4k-1}\backslash\interior{D^{8k-2}} \cong  S^{4k-1} \times D_-^{4k-1}  \cup_{D_-^{4k-1} \times D_-^{4k-1}} D_-^{4k-1} \times S^{4k-1}.\]  Their mapping tori combine to a $W_{1,1}$-bundle over $S^1\vee S^1$ which we can arrange to be trivialized over a neighborhood of the base point. As  $f'_{\xi}$ and $f'_\eta$ commute strictly, this bundle has a canonical extension over the $2$-cell of $S^1\times S^1$, so induces a $W_{1,1}$-bundle $\widetilde{E} \to S^1 \times S^1$, from which we obtain a $W_1$-bundle $E \to S^1 \times S^1$ by fiberwise coning off the boundary of $W_{1,1}$. To show that this bundle has the claimed properties, note that $f'_{\xi}$ and $f'_\eta$ fix the deformation retract $S^{4k-1} \vee S^{4k-1} = S^{4k-1} \times \{0\} \cup_{\{(0,0)\} } \{0\} \times S^{4k-1} \subset W_{1,1}$ pointwise, so the $W_{1,1}$-bundle $\widetilde{E} \to S^1 \times S^1$ is trivial as a fibration and $\widetilde{E}$ contains $\widetilde{E}^{\operatorname{core}} = (S^{4k-1} \vee S^{4k-1}) \times (S^1 \times S^1)$ canonically as a deformation retract. As $\widetilde{E}^{\operatorname{core}} \subset E$ induces an isomorphism on $\oH^{4k}(-)$, to determine the Pontryagin classes of the stable topological tangent bundle of $E$, it suffices to do so when pulled back to $\widetilde{E}^{\operatorname{core}}$. Since $f_{\xi}'$ and $f_{\eta}'$ are the identity on $D^{4k-1}_-\times S^{4k-1}\subset W_{1,1}$ and $S^{4k-1}\times D^{4k-1}_-\subset W_{1,1}$ respectively, the submanifolds $(* \vee S^{4k-1}) \times (S^1 \vee *)$ and $(S^{4k-1}\vee *) \times (*\vee S^1)$ come with a trivialized disk normal bundle in $E$, so the pullback of stable tangent bundle of $E$ is trivial over them. Furthermore, by construction, the compositions
\[(S^{4k-1} \vee *) \times (S^1 \vee *)\ra S^{4k}\xra{\xi\oplus \varepsilon} \BSTop(4k) \quad\text{and}\quad (* \vee S^{4k-1}) \times (* \vee S^1)\ra S^{4k}\xra{\eta\oplus \varepsilon} \BSTop(4k)\] classify normal bundles of the other two summands of $\widetilde{E}^{\operatorname{core}}$, where the first maps are given by collapsing the $(4k-1)$-skeleton and the second ones by stabilising $\xi$ and $\eta$, so we get \[p_k(E)=p_k(\xi)\cdot[(S^{4k-1} \vee *) \times (S^1 \vee *)]^{*} +p_k(\eta)\cdot [* \vee (S^{4k-1}) \times (* \vee S^1)]^{*} \in\oH^{4k}(E;\bfQ),\] where $(-)^*$ denotes Poincaré dual. Since $(S^{4k-1} \vee *) \times (S^1 \vee *)$ and $(* \vee S^{4k-1}) \times (* \vee S^1)$ intersect in $E$ transversely in a point and have normal bundles with trivial Euler class as they destabilize to $(4k-1)$-bundles, we conclude $p_k(E)^2=2p_k(\xi)p_k(\eta)\cdot1^*\in\oH^{8k}(E;\bfQ)$, as claimed. To see that the signature of $E$ vanishes, by the main result of \cite{ChernHirzebruchSerre}, it suffices  to show that $\pi_1(S^1\times S^1)$ acts trivially on $\oH^{*}(W_{1};\bfQ)$. This is clear in degree $0$ and $2n$ and follows in degree $n$ from the triviality of  $\widetilde{E}\ra S^1\times S^1$ as a fibration.

To prove the claim for a general fiber $M$ instead of $W_1$, we use the assumption $g(M)\ge1$ to write $M$ as a connected sum $M=W_1\sharp M'$ for a manifold $M'$, and observe that the $W_{1,1}$-bundle $\widetilde{E}\ra S^1\times S^1$ built above contains a trivial disk bundle in its bundle of boundaries, since we chose $\xi$ and $\eta$ to fix a neighborhood of $D^{4k}_-\subseteq S^{4k}$. Taking fiberwise boundary connected sum of $\widetilde{E}\ra S^1\times S^1$ with a trivial $M'\backslash \interior{D^{8k-2}}$ bundle over $S^1\times S^1$, followed by fiberwise coning of the boundary results in a $M$-bundle with the desired properties.
\end{proof}

\begin{thm}\label{thm:minimalPontryagyintopologically}For $k\neq1,2$, the Pontryagin number $p_k^2(E)\in\bfQ$ of the total space of a topological oriented bundle $\pi\colon E\ra S$ over a closed oriented surface with fiber a closed highly connected almost parallelizable $(8k-2)$-manifold $M$ is contained in the subgroup
\[2(p^{\min}_k)^2\cdot\bfZ\subseteq\bfQ.\] If $g(M)\ge1$, then there exists a bundle $\pi\colon E\ra S^1 \times S^1$ of the above type satisfying \[p_k^2(E)=2(p_k^{\min})^2\quad\text{and}\quad\sigma(E)=0.\]
\end{thm}
\begin{proof}Adapting the argument at the beginning of the proof of \cref{bigtheorem:bordismimage} in \cref{sect:proofofThmA} to the topological case by replacing $\BSO$ by $\BSTop$, one sees that $E$ is topologically oriented bordant to a topological $(4k-1)$-connected manifold, so the first part follows from \cref{lem:topologicallyevenform} ii). By \cref{lem:surjectivityofdiscbundles}, we can represent a generator of $\pi_{4k}\BTop$ by a class $\nu\in\pi_{4k}\BTop(4k-1)$ as long as $k\neq1,2$, from which the second part follows by applying \cref{lem:ConstructionBundleMinimalPontryagin} to the bundles $\nu=\xi=\eta$.
\end{proof}

\subsection{Divisibility of the signature}
We continue by proving \cref{bigthm:signaturestopologically}, which determines the values of the signature (or equivalently the corresponding Hirzebruch $\cL$-class) on the total spaces of the bundles in consideration. 

\begin{proof}[Proof of \cref{bigthm:signaturestopologically}]Arguing similarly as in the proof of \cref{thm:minimalPontryagyintopologically} and at the end of the proof of \cref{lem:ConstructionBundleMinimalPontryagin}, the claim follows from \cref{lem:topologicallyevenform} if we construct a $W_4=\sharp^4 (S^n\times S^n)$-bundle $E\ra S$ that contains a trivial $D^{4m}$-subbundle and satisfies $\sigma(E)=8$. More generally, we shall construct such a bundle with fiber $W_g=\sharp^g(S^n\times S^n)$ for any $g\ge4$. To this end, we fix a disk $D^{4m}\subseteq W_g$ and consider the commutative diagram
\begin{center}
\begin{tikzcd}[column sep=0.4cm]
\Bord_2(\BDiff(W_{g,1},D^{4m}))\arrow[d]\arrow[r,two heads]&\Bord_2(B\pi_0\Diff(W_{g,1},D^{4m}))\arrow[d]\arrow[r,two heads]&\Bord_2(\BSp^q_{2g}(\bfZ))\arrow[d,equal]\arrow[r,"\sigma"]&\bfZ\arrow[d,equal]\\
\Bord_2(\BHomeo(W_g,D^{4m}))\arrow[r,two heads]&\Bord_2(B\pi_0\Homeo(W_{g},D^{4m}))\arrow[r]&\Bord_2(\BSp^q_{2g}(\bfZ))\arrow[r,"\sigma"]&\bfZ,
\end{tikzcd}
\end{center} explained in the following. The manifold $W_{g,1}$ is obtained from $W_g$ by removing the interior of a disk that is disjoint from the previously chosen one. The two left vertical arrows are induced by coning off diffeomorphisms of $W_{g,1}$ to homeomorphisms of $W_g$. The two left horizontal arrows are induced by taking path-components, and are surjective since the canonical morphism $\oH_2(X;\bfZ)\ra\oH_2(B\pi_1(X);\bfZ)$ is surjective for any connected space $X$. The middle horizontal arrows are induced by the action on homology $\oH_n(W_{g,1};\bfZ)\cong\bfZ^{2g}$ preserving the (nondegenerate and antisymmetric) intersection form, so its image is contained in the symplectic group $\Sp_{2g}(\bfZ)\subseteq\GL_{2g}(\bfZ)$. For homeomorphisms that are isotopic to a diffeomorphism, this action lands in the subgroup $\Sp^q_{2g}(\bfZ)\subseteq\Sp_{2g}(\bfZ)$ of automorphisms which preserve Wall's quadratic refinement (see \cite{Wall2n}), which in the case of $W_g^{4m-2}$ with $m\neq1,2$ can be identified with the function $q\colon \bfZ^{2g}\ra \bfZ/2$ sending $\sum_{i=1}^g(a_ie_i+b_if_i)$ to $\sum_{i=1}^ga_ib_i$, where $\{e_i,f_i\}_{1\le i\le g}$ is the standard symplectic basis of $\bfZ^{2g}$ and $a_i,b_i\in\bfZ$. In this special case, the quadratic refinement turns out to also be preserved by homotopy equivalences (see e.g.\,\cite[Thm 10.3]{Baues}), so definitely by homeomorphisms, which explains the lower middle horizontal arrow. For the surjectivity of the upper right horizontal arrow, we use work of Wall \cite[Lem.\,23]{Wall_III}, \cite[Lem.\,10, 12]{Wall_II}, from which it follows that the homology action induces a short exact sequence
\[0\ra\Hom(\oH_n(W_{g}),S\pi_n\SO(n))\ra\pi_0\Diff(W_{g,1},D^{4m-2})\ra \Sp^q_{2g}(\bfZ)\ra 0,\] where $S\pi_n\SO(n)\subseteq\pi_n\SO(n+1)$ is the image of the stabilisation map $\pi_n\SO(n-1)\ra\pi_n\SO(n)$ and the induced $\Sp^q_{2g}(\bfZ)$-action on $\Hom(\oH_n(W_{g}),S\pi_n\SO(n))$ is the evident one.\footnote{Note that Wall considers pseudo-isotopy instead of isotopy classes, but these agree for simply connected manifolds by Cerf's ``pseudo-isotopy implies isotopy''.} The Serre spectral sequence of this extension, together with the fact that oriented bordism agrees with integral homology in low degrees, induces an exact sequence of the form
\[\Bord_2(B\pi_0\Diff(W_{g,1},D^{4m-2}))\ra \Bord_2(\BSp^q_{2g}(\bfZ))\ra \Hom(\oH_n(W_{g}),S\pi_n\SO(n))_{\Sp^q_{2g}(\bfZ)},\] so the surjectivity of $\Bord_2(B\pi_0\Diff(W_{g,1},D^{4m}))\ra\Bord_2(\BSp^q_{2g}(\bfZ))$ follows from showing that the coinvariants $\Hom(\oH_n(W_{g}),S\pi_n\SO(n))_{\Sp^q_{2g}(\bfZ)}\cong (\bfZ^{2g}\otimes S\pi_n\SO(n))_{\Sp^q_{2g}(\bfZ)}$ vanish, which one can see by acting with the matrices
\[\left(\begin{smallmatrix}P_\sigma&0\\0&P_\sigma\end{smallmatrix}\right)\in\Sp_{2g}^q(\bfZ)\quad\text{for }\sigma\in\Sigma_g,\quad\left(\begin{smallmatrix}0&-I_g\\I_g&0\end{smallmatrix}\right)\in\Sp_{2g}^q(\bfZ),\quad\text{and} \quad\left(\begin{smallmatrix}1&0&&&&\\1&1&&&&\\&&I_{g-2}&&&\\&&&1&-1&\\&&&0&1&\\&&&&&I_{g-2}\end{smallmatrix}\right)\in \Sp_{2g}^q(\bfZ),\]
where $P_\sigma\in\GL_g(\bfZ)$ denotes the permutation matrix associated to $\sigma\in\Sigma_g$ and $I_g\in\GL_g(\bfZ)$ is the unit matrix. Returning to the commutative diagram above, the rightmost morphism $\sigma\colon \Bord_2(\BSp^q_{2g}(\bfZ))\ra\bfZ$ is induced by assigning a bundle of symplectic lattices of rank $2g$ over a closed oriented surface its signature (see e.g.\,\cite[Lem 7.5]{GRWabelian}). By the argument of \cite{ChernHirzebruchSerre}, the horizontal composition $\Bord_2(\BHomeo(W_g,D^{4m}))\ra\bfZ$ agrees with the morphism that maps a topological bundle to the signature of its total space, so it suffices to show that its image contains $8\cdot\bfZ$. This is the case for the image of $\sigma\colon \Bord_2(\BSp_{2g}^q(\bfZ))\ra\bfZ$ as long as $g\ge4$ by the argument at the end of the proof of \cite[Thm 7.7]{GRWabelian}, so the surjectivity of $\Bord_2(\BDiff(W_g,D^{4m-2}))\ra \Bord_2(\BSp_{2g}^q(\bfZ))$ shows that also $\Bord_2(\BDiff(W_g,D^{4m-2}))$ contains a class of signature $8$ as long as $g\ge4$, and hence so does $\Bord_2(\BHomeo(W_g,D^{4m-2}))$, which finishes the proof.
\end{proof}

\subsection{Generalized Miller--Morita--Mumford classes, topologically}
Combining \cref{lem:homeodiffrationally} with \cref{bigthm:signaturestopologically}, we can conclude, similarly to the discussion in \cref{section:MMMclassessmoothly}, that the torsion free quotient of $\oH^2(\BHomeo(M);\bfZ)$ for a closed highly connected almost parallelizable $(4m-2)$-manifold $M$ with $g(M)\ge7$ and $m$ odd is generated by $\kappa_{\cL_m}/8$, or equivalently by $\frac{\sigma_m}{8}\cdot\frac{\kappa_{p_{m}}}{2(2m-1)!j_m}$. For $m=2k$ is even, \cref{thm:minimalPontryagyintopologically} together with \cref{bigthm:signaturestopologically} shows that the image of the morphism 
$(\sigma,p_k^2)\colon\Bord_2(\BHomeo(M))\ra\bfZ\oplus \bfQ$ is generated by $(8,0)$ and $(0,2{(p^{\min}_k)}^2)$, so $\oH^2(\BHomeo(M);\bfZ)_{\free}$ is generated by $\kappa_{\cL_{2k}}/8$ and $\kappa_{p_k^2}/(2{(p^{\min}_k)}^2)$. From this, after expressing $\cL_{2k}$ in terms of $p_{2k}$ and $p^2_{k}$ (see the beginning of \cref{section:charnumberformulas}), the basis claimed in \cref{bigthm:cobordismlatticeTOP} follows from the computation of ${p_k^{\min}}$ in \cref{lem:BrumfielMinimalPontryagin}.

\bibliographystyle{amsalpha}
\bibliography{literature}
\vspace{0.15cm}

\end{document}